\newcommand{\CC}{\mathbb{C}}
\newcommand{\DD}{\mathbb{D}}
\newcommand{\HH}{\mathbb{H}}
\newcommand{\LL}{\mathbb{L}}
\newcommand{\NN}{\mathbb{N}}
\newcommand{\PP}{\mathbb{P}}
\newcommand{\SSS}{\mathbb{S}}
\newcommand{\VV}{\mathbb{V}}
\newcommand{\ZZ}{\mathbb{Z}}
\newcommand{\cC}{\mathcal{C}}
\newcommand{\fF}{\mathcal{F}}
\newcommand{\gG}{\mathcal{G}}
\newcommand{\hH}{\mathcal{H}}
\newcommand{\oO}{\mathcal{O}}
\newcommand{\vV}{\mathcal{V}}
\newcommand{\Jbar}{\overline{J}}
\newtheorem{lemma}{Lemma}
\newtheorem{definition}[lemma]{Definition}
\newtheorem{proposition}[lemma]{Proposition}
\newtheorem{corollary}[lemma]{Corollary}
\newtheorem{theorem}[lemma]{Theorem}
\title{A support theorem for Hilbert schemes of planar curves}
\author{Luca Migliorini} \author{Vivek Shende}
\begin{document}

 \begin{abstract}
   Consider a family of integral complex
   locally planar curves whose relative Hilbert scheme of
   points is smooth.  
   The decomposition theorem of Beilinson, Bernstein, and Deligne
   asserts that the pushforward of the constant sheaf on the relative
   Hilbert scheme
   splits as a direct sum of shifted semisimple perverse sheaves.  We will
   show that no summand is supported in positive codimension.  
   It follows that the 
   perverse filtration on the cohomology of the compactified Jacobian
   of an integral plane curve encodes the cohomology of {\em all} Hilbert
   schemes of points on the curve.  Globally, it follows that
   a family of such curves
   with smooth relative compactified Jacobian
   (``moduli space of D-branes'') in an irreducible curve class on 
   a Calabi-Yau threefold will contribute equally to the
   BPS invariants in the formulation of
   Pandharipande and Thomas, and in the formulation of 
   Hosono, Saito, and Takahashi.
 \end{abstract}

\maketitle

\section{Introduction}

\noindent
In this note a {\em curve} will always be 
{\em integral, complete, locally planar, 
and defined over $\CC$.}\footnote{This
reflects a limitation of the authors rather than a certainty that
the methods do not work in characteristic $p$.}

\vspace{2mm}
Let $C$ be a curve of arithmetic genus $g$.  The Hilbert scheme
of points $C^{[d]}$ parameterizes length $d$ subschemes of $C$; it
is complete, integral, $d$-dimensional, and l.c.i. \cite{AIK,BGS}.
If $\pi: \mathcal{C} \to B$ is a family of curves,
there is a relative Hilbert scheme
$\pi^{[d]}:\mathcal{C}^{[d]} \to B$ with fibres $(\mathcal{C}^{[d]})_b = 
(\mathcal{C}_b)^{[d]}$.  
Planarity of the curves ensures the existence of families in which the
total space of $\mathcal{C}^{[d]}$ is smooth (\cite{S}), 
see Theorem \ref{thm:smoothness} below; 
ultimately this is a consequence
of the smoothness of Hilbert scheme of points on a surface.  
When $\mathcal{C}^{[d]}$ is smooth, 
the decomposition theorem of Beilinson, Bernstein and
Deligne \cite{BBD} applied to the proper map 
$\pi^{[d]}: \mathcal{C}^{[d]} \to B $
asserts that $\mathrm{R} \pi^{[d]}_*  \CC$ decomposes
as a direct sum of shifted 
intersection complexes associated to local systems on constructible
subsets of the base.

Let $\widetilde{\pi}: \widetilde{\cC} \to \widetilde{B}$ denote
the restriction of $\pi$ to the 
smooth locus.  The Hilbert schemes of a smooth curve 
are its symmetric products, and in particular the map $\widetilde{\pi}^{[d]}$ 
is smooth.  Thus
the summand of $\mathrm{R} \pi^{[d]}_*  \CC[d + \dim B]$ 
with support equal to $B$
is $\bigoplus \mathrm{IC}(B,\mathrm{R}^{d+i} \widetilde{\pi}^{[d]}_* \CC)[-i]$. 
As pointed out by Macdonald \cite{mcd}, 
the cohomology of the symmetric products is
expressed in terms of the cohomology of the curves by the formula
\begin{equation} \label{eq:macdonald}
  \mathrm{R}^i \widetilde{\pi}^{[d]}_* \CC = 
  \bigoplus_{k= 0}^{\lfloor i/2 \rfloor} \left( \mbox{$\bigwedge$}^{i-2k} 
  \mathrm{R}^1 \widetilde{\pi}_* \CC \right)(-k)  = 
  (\mathrm{R}^{2d-i}\widetilde{\pi}^{[d]}_* \CC)(d-i) \,\,\,\,\,\,
  \mbox{for $i \le d$}
\end{equation} 
Even given this expression, computing 
$\mathrm{IC}(B,\mathrm{R}^i \widetilde{\pi}^{[d]}_* \CC)$ is a
nontrivial matter, about which we say nothing here.  
But at least   $\mathrm{R} \pi^{[d]}_* \CC[d + \dim B]$ contains no other summands:

\begin{theorem}
 \label{thm:support}
  Let $\pi: \cC \to B$ be a family of integral plane curves, 
  and let $\widetilde{\pi}:\widetilde{\mathcal{C}} \to
  \widetilde{B}$ its restriction to the smooth locus. 
  If $\cC^{[d]}$ is smooth, then 
  \[
   \mathrm{R} \pi^{[d]}_* \CC[d + \dim B] =  \bigoplus_{i=-d}^{d}
  \mathrm{IC}(B, \mathrm{R}^{d+i} \widetilde{\pi}^{[d]}_* \CC)[-i].
  \]
\end{theorem}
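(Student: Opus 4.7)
The plan is to apply the decomposition theorem and then eliminate any simple summand supported on a proper closed subvariety of $B$. Since $\pi^{[d]}$ is proper and $\cC^{[d]}$ is smooth, \cite{BBD} yields
\[
\mathrm{R}\pi^{[d]}_*\CC[d+\dim B] \;\cong\; \bigoplus_\alpha \mathrm{IC}(Z_\alpha,L_\alpha)[-i_\alpha],
\]
with $Z_\alpha \subseteq B$ closed irreducible and $L_\alpha$ a simple local system on a dense open of $Z_\alpha$. Over $\widetilde{B}$ the restricted map $\widetilde{\pi}^{[d]}$ is smooth with symmetric-product fibres, so by Macdonald's formula \eqref{eq:macdonald} the summands with $Z_\alpha = B$ are forced to be precisely the terms $\mathrm{IC}(B,\mathrm{R}^{d+i}\widetilde{\pi}^{[d]}_*\CC)[-i]$ of the theorem. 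The essential content is therefore to rule out any summand $\mathrm{IC}(Z,L)[-i]$ with $Z \subsetneq B$.

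My principal strategy is to factor $\pi^{[d]}$ through the Abel-Jacobi morphism $\alpha_d: \cC^{[d]} \to \overline{\mathcal{J}}^d$ to the relative compactified Jacobian. Over $\widetilde{B}$, and for $d$ large, $\alpha_d$ is a $\PP^{d-g}$-bundle, so a Leray-type decomposition of $\mathrm{R}(\alpha_d)_*\CC$ would transfer the support problem for $\cC^{[d]}$ to one for $\overline{\mathcal{J}}^d \to B$. I would attack the latter in the spirit of Ng\^o's support theorem, using the fibrewise group structure of the relative compactified Jacobian, its $\delta$-regularity \cite{S}, and the Verdier self-duality of $\mathrm{R}\pi^{[d]}_*\CC[d+\dim B]$ forced by the smoothness of $\cC^{[d]}$ (which pairs each summand $\mathrm{IC}(Z,L)[-i]$ with $\mathrm{IC}(Z,L^\vee)[i]$). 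For small $d$, nested Hilbert schemes provide incidence correspondences $\cC^{[d]} \leftarrow \cC^{[d,d+1]} \rightarrow \cC^{[d+1]}$ that would propagate the conclusion from large $d$ back down.

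The main obstacles are twofold. First, I would need to understand the degenerations of $\alpha_d$ over the discriminant well enough to transport simple summands across, since the $\PP^{d-g}$-bundle structure fails there; this is largely a matter of the geometry of planar curve singularities, and the smoothness of $\cC^{[d]}$ should impose strong constraints on what can go wrong. Second, adapting a Ng\^o-style argument to the compactified-Jacobian setting is nontrivial because the degenerate fibres of $\overline{\mathcal{J}}$ are semi-abelian rather than proper abelian varieties, so Ng\^o's theorem does not apply off the shelf; some genuinely new input — presumably exploiting again the smoothness of $\cC^{[d]}$ and the Macdonald description of the generic stalks to control the abelian part of the action — is required. I expect this second point, amounting essentially to a support theorem for $\overline{\mathcal{J}}\to B$ under the present hypotheses, to be the heart of the argument.
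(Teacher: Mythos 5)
Your reduction to excluding summands supported on proper subvarieties is the right framing, and your identification of the full-support summands via Macdonald's formula matches the paper. But the route you propose has a genuine gap at its foundation: the hypothesis of the theorem is only that $\cC^{[d]}$ is smooth for the \emph{one} value of $d$ in question, and by Proposition \ref{prop:down} this gives smoothness of $\cC^{[n]}$ only for $n \le d$. It does \emph{not} give smoothness of $\cC^{[d']}$ for $d' > d$, nor of the relative compactified Jacobian $\Jbar(\cC)$ --- by Theorem \ref{thm:smoothness}, smoothness of $\cC^{[d]}$ needs the image $I$ of $T_bB$ in the first-order deformations to have dimension $\ge \min(d,\delta)$, whereas smoothness of $\Jbar(\cC)$ needs $\dim I \ge \delta$ and transversality to the equigeneric ideal. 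So for $d < \delta$ your plan cannot even get started: the space $\overline{\mathcal{J}}^d$ (equivalently $\cC^{[d']}$ for $d' \gg 0$) to which you want to transfer the problem via the Abel--Jacobi $\PP^{d-g}$-bundle may simply be singular, the decomposition theorem for it produces IC sheaves you have no control over, and there is no ``large $d$'' statement to propagate back down. Your second acknowledged obstacle --- actually proving a support theorem for $\overline{\mathcal{J}} \to B$ --- is also left entirely open; you correctly flag it as the heart of the matter, but that means the proof is not there. What you sketch is essentially the strategy of Maulik--Yun \cite{MY}, which does go through Ng\^o's support theorem but precisely for this reason requires additional hypotheses.

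The paper deliberately avoids the Jacobian route. Both of its proofs work directly with the Hilbert schemes. The key quantitative inputs are: (i) Corollary \ref{cor:codim}, that smoothness of $\cC^{[d]}$ forces the cogenus-$\delta$ locus to have codimension $\ge \delta$ for $\delta \le d$ (this replaces $\delta$-regularity); (ii) the Goresky--MacPherson-type estimate of Lemma \ref{lem:GM} combined with Lemma \ref{lem:estimate}, which together kill extra summands when $d \le \delta$ after cutting to a slice of dimension $\delta$ (Proposition \ref{prop:smalld}); and (iii) for $d > \delta$, a symmetry argument: either the product formula $Z_C(q) = (q^2\LL)^{\delta} Z_C(1/q\LL)$ in the Grothendieck group of varieties together with weight polynomials (Proposition \ref{prop:weights}), or a reduction via the thick subcategory $\mathfrak{N}(\coprod B_i)$ to the dense locus of nodal curves, where everything is computed by Picard--Lefschetz. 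If you want to salvage your approach, you would either need to assume $\Jbar(\cC)$ smooth from the outset (which proves only Corollary \ref{cor:perverse}, not the theorem as stated) or find a substitute for steps (i)--(iii) that works at the level of the Hilbert scheme for the given $d$ alone.
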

From now on we will use the notation 
\[
{}^p \mathrm{R}^i \pi^{[d]}_* \CC[d + \dim B] := {}^p {\hH}^i \left( \mathrm{R} \pi^{[d]}_* \CC[d + \dim B] \right)
\]
for the perverse cohomology sheaves of $\mathrm{R} \pi^{[d]}_* \CC[d + \dim B]$.

\medskip
The central term of Equation \ref{eq:macdonald} can be reinterpreted in
terms of the family of Jacobians of the curves.  Indeed, taking 
$\widetilde{\pi}^J:J(\widetilde{\mathcal{C}}) \to \widetilde{B}$ 
to be the family of Jacobians over the smooth locus, then
there is a (canonical) identification of local systems
\begin{equation} \label{eq:jac}
\mathrm{R}^i\widetilde{\pi}^J_* \CC = \mbox{$\bigwedge$}^i 
(\mathrm{R}^1 \widetilde{\pi}_* \CC)
\end{equation}  
Consequently, 
\begin{equation} \label{eq:hilbjac}
  \mathrm{R}^i \widetilde{\pi}^{[d]}_* \CC = \bigoplus_k 
  (\mathrm{R}^{i-2k} \widetilde{\pi}^J_* \CC) (-k) 
  = 
  (\mathrm{R}^{2d-i}\widetilde{\pi}^{[d]}_* \CC)(d-i) \,\,\,\,\,\,
  \mbox{for $i \le d$}
\end{equation}  

It can be convenient to express Equations (\ref{eq:macdonald}), (\ref{eq:jac}),
and (\ref{eq:hilbjac}) in the following formula:
\begin{equation} \label{eq:macdonaldseries}
  \sum_{d=0}^\infty \sum_{i=0}^{2d} q^d
  \mathrm{R}^i \widetilde{\pi}^{[d]}_* \CC
  = \frac{ \sum\limits_{i=0}^{2g} 
    q^i \bigwedge ^i (\mathrm{R}^1 \widetilde{\pi}_* \CC)}
 {(1-q \CC)(1-q \CC(-1) )}
  = 
  \frac{\sum\limits_{i=0}^{2g} q^i \mathrm{R}^i \widetilde{\pi}^J_* \CC}{(1-q \CC)
    (1-q \CC(-1))}    
\end{equation}

The family of Jacobians can be extended 
over the singular locus of $\pi$ 
to the {\em compactified Jacobian} \cite{AK}, 
$\pi^J: \Jbar^d(\cC) \to B$, whose fibre $\Jbar^d(\cC)_b = 
\Jbar^d(\cC_b)$ parameterizes  rank one,
degree $d$
torsion free sheaves on $\cC$.\footnote{It also extends to the 
{\em generalized
Jacobian} $J(\cC)$ whose fibre $J(\cC)_b$ parameterizes line bundles on 
$\cC_b$; this is a commutative group scheme of dimension $g$ of which
the affine part is of dimension $\delta(\cC_b)$.  This is a subscheme
of the compactified Jacobian, and acts on it.  Such actions
are  central to Ng\^o's arguments, but play no role here. 
}  The map $\pi^J$ is proper, and for
Gorenstein curves there is an Abel-Jacobi
map $AJ: \cC^{[d]} \to \Jbar^d(\cC)$ taking a subscheme to the dual of its ideal 
sheaf.\footnote{In general, it is better to define the Abel-Jacobi
map from the Quot scheme of the dualizing sheaf, see \cite{AK}.}
For $d > 2g-2$, the map $AJ$ is a $\PP^{d-g}$ bundle, and  ${\mathrm R}\,AJ_*\CC=\oplus_{i=0}^{d-g} \CC[-2i]$; thus the statement 
in Theorem \ref{thm:support} is true
in this range
for the map $\pi^J$ as well.  Over a sufficiently small open set, 
$\pi$ admits a section $\sigma$
with image in the smooth locus of the curves; twisting by $\oO(\sigma)$ 
identifies the $\Jbar^d(\cC)$ for varying $d$ and so $\pi^J_* \CC$ does
not depend on $d$. In particular, we recover
the support theorem for the map $\pi^J$, a special case of Ngo's support theorem \cite{N}.
It can be shown \cite[Prop. 14]{S} 
that smoothness of
the relative compactified Jacobian implies smoothness of all relative Hilbert
schemes.  
Therefore taking IC sheaves in Equations (\ref{eq:macdonald})
and (\ref{eq:jac}) yields the following Corollary.

\begin{corollary} \label{cor:perverse}
  Let $\pi: \mathcal{C} \to B$ be a family of integral plane curves
  of arithmetic genus $g$.
  If the relative compactified Jacobian $\Jbar(\cC)$ is smooth, then:
  \[
  {}^p \mathrm{R}^{i-d} \pi^{[d]}_* \CC[d + \dim B] = 
  \bigoplus_{k=0}^{\lfloor i / 2 \rfloor}  
  {}^p \mathrm{R}^{i-g-2k} \pi^{J}_* \CC[g + \dim B](-k) \,\,\,\, 
  \mbox{ for $0 \le i \le d$}
  \] 
  (The ${}^p \mathrm{R}^{i-d}$ for $i > d$ are determined similarly by duality.)
\end{corollary}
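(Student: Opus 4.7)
The plan is to deduce Corollary \ref{cor:perverse} formally from Theorem \ref{thm:support}, its analog for the relative compactified Jacobian, and Macdonald's identity (\ref{eq:hilbjac}), by reading off perverse cohomology sheaves on each side and matching them via the $\mathrm{IC}$ functor applied termwise to the local systems on $\widetilde{B}$.

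First I would invoke \cite[Prop.~14]{S} to promote the hypothesis that $\Jbar(\cC)$ is smooth into the statement that $\cC^{[d]}$ is smooth for every $d$, so that Theorem \ref{thm:support} applies to every $\pi^{[d]}$. Extracting the $(i-d)$-th perverse cohomology sheaf from the decomposition in that theorem gives
\[
{}^p \mathrm{R}^{i-d} \pi^{[d]}_* \CC[d + \dim B] \;=\; \mathrm{IC}\!\left(B,\, \mathrm{R}^{i} \widetilde{\pi}^{[d]}_* \CC\right),
\]
which identifies the left-hand side of the corollary as the $\mathrm{IC}$ extension of a local system on $\widetilde{B}$.

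Next I would establish the analogous identity
\[
{}^p \mathrm{R}^{j} \pi^{J}_* \CC[g + \dim B] \;=\; \mathrm{IC}\!\left(B,\, \mathrm{R}^{g+j} \widetilde{\pi}^{J}_* \CC\right)
\]
for $\pi^J$. For $d > 2g-2$ the Abel-Jacobi map $AJ:\cC^{[d]} \to \Jbar^d(\cC)$ is a $\PP^{d-g}$-bundle, so pushing the conclusion of Theorem \ref{thm:support} for $\pi^{[d]}$ forward along $AJ$ yields the corresponding decomposition for $\pi^J$. The observation in the excerpt that $\pi^J_* \CC$ does not depend on $d$ (via twisting by a local section landing in the smooth locus) then transports this conclusion to every degree.

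Finally I would apply $\mathrm{IC}(B, -)$ termwise to
\[
\mathrm{R}^{i} \widetilde{\pi}^{[d]}_* \CC \;=\; \bigoplus_{k=0}^{\lfloor i/2\rfloor} \left(\mathrm{R}^{i-2k} \widetilde{\pi}^{J}_* \CC\right)(-k) \qquad (i \le d),
\]
which is part of (\ref{eq:hilbjac}); the operation is legitimate because $\mathrm{IC}(B,-)$ commutes with direct sums of local systems and with Tate twists. Substituting the two perverse-cohomology identifications above on either side delivers the formula of Corollary \ref{cor:perverse} in the stated range $0 \le i \le d$. The remaining range $i > d$ falls out of relative Poincar\'e--Verdier duality: $\cC^{[d]}$ is smooth and proper over $B$ of relative dimension $d$, so $\mathrm{R}\pi^{[d]}_*\CC[d+\dim B]$ is self-dual up to Tate twist, exchanging ${}^p\mathrm{R}^{-j}$ with ${}^p\mathrm{R}^{j}$. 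No step here is a genuine obstacle; the only content beyond bookkeeping is the passage from Theorem \ref{thm:support} to its Jacobian analog, which the projective-bundle structure of $AJ$ handles cleanly.
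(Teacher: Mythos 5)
Your proposal is correct and follows essentially the same route as the paper: invoke \cite[Prop.~14]{S} to get smoothness of all relative Hilbert schemes, transfer Theorem \ref{thm:support} to $\pi^J$ via the $\PP^{d-g}$-bundle structure of the Abel--Jacobi map for $d > 2g-2$ together with the $d$-independence of $\pi^J_*\CC$, and then take $\mathrm{IC}$ sheaves termwise in the Macdonald identity (\ref{eq:hilbjac}). The paper compresses all of this into the paragraph preceding the corollary, so your write-up is just a more explicit version of the intended argument.
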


This corollary has a consequence for the enumerative geometry of Calabi-Yau
three-folds, which we briefly sketch.  Gopakumar and Vafa argued in \cite{GV} 
that
the cohomology of the moduli space of $D$-branes (roughly speaking, 
semistable sheaves supported on curves) on a Calabi-Yau $Y$ 
should give rise to
{\em integer} ``BPS'' invariants, one for each genus and homology class
in $\mathrm{H}_2(Y,\ZZ)$,  which encode the Gromov-Witten invariants
of $Y$.  Hosono, Saito, and Takahashi \cite{HST} use intersection cohomology
and the tools of \cite{BBD} to give a precise formulation; however, 
their proposal  is known {\em not}
to give the desired BPS numbers in general \cite{BP}.  A different definition
of integer BPS invariants is given by Pandharipande and Thomas 
\cite{PT} using the closely related
spaces of ``stable pairs'', which for integral planar curves are just the
Hilbert schemes of points.  By the work of Behrend \cite{B}, the BPS invariants
are extracted by a weighted Euler characteristic of these spaces, the 
weighting function depending only on the singularities of the moduli space.
For BPS invariants associated to {\em irreducible} homology classes, it is
sensible to discuss the contribution of an individual curve in both theories;
if the moduli space of sheaves on $Y$ is smooth along the locus of sheaves
supported on a curve $C$, then the intersection cohomology considerations 
may be neglected in \cite{HST}, and likewise 
the weighting function of Behrend may be neglected in \cite{PT}.  In this case,
taking Euler characteristics in the Corollary yields the equality of the 
contributions of the curve $C$ to these two theories.   Such curves will 
certainly appear if $Y$ contains a Fano surface, and indeed the enumerative 
geometry of curves on surfaces is also illuminated by the stable pairs 
spaces \cite{KST, KT}.

The Hilbert schemes also appear in the conjecture
of Oblomkov, Rasmussen, and the second author 
\cite{OS, ORS}.  This relates the cohomology of the Hilbert schemes of points 
on a locally planar curve to the Khovanov-Rozansky HOMFLY homology 
of the links of its singularities.  
The HOMFLY homology is a vector
space carrying three gradings, and its Poincar\'e polynomial 
$\overline{\mathcal{P}}$ is written 
in the variables $a, q, t$.  When the
curve has a unique singularity,  the conjecture 
implies that, up to a normalization, the lowest coefficient of $a$ in 
$\overline{\mathcal{P}}$ is
$\sum q^{2n} \mathfrak{w}(C^{[n]})$, where $\mathfrak{w}$ is the
weight polynomial 
$\mathfrak{w}(X) := \sum_{i,j} t^i (-1)^{i+j} \dim \mathrm{Gr}_W^i 
\mathrm{H}^j_c(X)$.  The present work shows
that the series on the RHS may be extracted instead from the 
perverse filtration on the cohomology of the compactified Jacobian
of $C$.  The perverse filtration is surely no easier to compute
directly than the Hilbert scheme series, but at least
in special cases the cohomology of the compactified 
Jacobian (under its alias of affine Springer fibre \cite{L}) appears in 
the geometric construction of the spherical 
representations of the rational Cherednik algebra \cite{VV}.  The 
perverse filtration interacts well with these geometric constructions, 
and as a consequence the series above can sometimes be computed after passing to
other incarnations of the Cherednik algebra and its representations
which are more suitable for computations.  Details appear in \cite{ORS}. 

\vspace{2mm}
Theorem \ref{thm:support} is inspired by the support theorem of B. C. 
Ng\^o \cite{N}, and is a consequence of it when $d > 2g-2$.  Nonetheless
our proofs -- we give two -- do not logically depend on his work, 
and rely on the deformation theory results in \cite{S}; in particular,
the analogue of the crucial "$\delta-$ regularity" assumption in \cite{N}, 
is automatically satisfied in our case once the total space $\mathcal{C}^{[d]}$ 
is smooth, see  Corollary
\ref{cor:codim}.

\vspace{2mm} \noindent {\bf Acknowledgements.} 
Corollary \ref{cor:perverse} was conjectured during a discussion between 
the authors and Lothar G\"ottsche.  
We are indebted to Zhiwei Yun for the 
suggestion that the induction procedure of Section \ref{sec:versal} be 
categorified  and for suggesting the statement of Proposition \ref{prop:pervind}.  
We also thank Davesh Maulik, Alexei Oblomkov, and 
Richard Thomas for enlightening
conversations, and Sam Gunningham and Ben Webster for helpful comments (on
MathOverflow) on Proposition \ref{prop:weights}. 
A different approach to the main theorem can be found in
the work of Davesh Maulik and Zhiwei Yun \cite{MY}, 
who deduce it, under additional hypotheses but in arbitrary characteristic,
from the support theorem of Ng\^o.  

\vspace{2mm} \noindent 
{\bf Conventions.}  We follow \cite{BBD} in declaring 
$\fF \in \mathrm{D}^b_c(X)$ perverse when $\dim \mathrm{Supp} \hH^i(\fF) \le -i$,
and the same holds for the Verdier dual.  
That is, if $X$ is smooth and $n$ dimensional, $\CC[n]$ is perverse.
In arguments of a topological nature, we omit Tate twists.
As mentioned at the outset, all curves are integral and have singularities
of embedding dimension 2.  All families of curves will enjoy 
a smooth base.  For a curve $C$,  we denote by $\overline{C}$ its normalization, and 
write $\delta(C)$ for the difference
between its arithmetic and geometric genera, which we term the
 {\em cogenus}: $ \delta(C)=p_a(C)-p_a(\overline{C})$.

\section{Background on relative Hilbert schemes and versal deformations}
\label{hsvd}

\noindent The Hilbert schemes of points on integral planar curves are singular, 
but not hopelessly so: 

\begin{theorem} \cite{AIK,BGS}. 
  Let $C$ be a complete integral planar curve.  Then $C^{[d]}$ is
  integral, complete, $d$-dimensional, and locally a complete intersection.
\end{theorem}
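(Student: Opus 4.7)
The plan is to exploit planarity to present $C^{[d]}$ locally as the zero locus of a regular section on a smooth ambient variety. At every point of $C$, embedding dimension two lets us choose an analytic (or étale) neighbourhood in which $C$ is cut out by a single equation $f$ on a smooth surface $S$. By Fogarty's theorem the Hilbert scheme $S^{[d]}$ is smooth of dimension $2d$, and if $\mathcal{Z} \subset S^{[d]} \times S$ is the universal length-$d$ subscheme with projections $p_1, p_2$, then $p_{1*}\mathcal{O}_{\mathcal{Z}}$ is a tautological rank-$d$ bundle on $S^{[d]}$. The function $f$ pulled back along $p_2$, restricted to $\mathcal{Z}$ and pushed forward along $p_1$, defines a section of this bundle whose vanishing locus is exactly (the local chart of) $C^{[d]}$. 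Thus $C^{[d]}$ is Zariski locally the vanishing of $d$ equations on a smooth $2d$-fold, so every component has dimension at least $d$, and it will be a local complete intersection the moment this bound is sharp.

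To pin down the dimension, I would examine the open $U \subset C^{[d]}$ of subschemes supported on the smooth locus $\widetilde{C} \subset C$. This $U$ is the open part of the symmetric product $\widetilde{C}^{(d)}$ consisting of $d$-tuples of distinct points; it is smooth, irreducible, and of dimension $d$. Any component meeting $U$ therefore has dimension exactly $d$, and the local defining equations then force pure dimension $d$ globally, since on a smooth $2d$-fold a $d$-codimensional component cannot coexist with a lower-codimension component in the same vanishing locus without the equations failing to be regular. Cohen--Macaulayness, automatic from the LCI structure, combined with generic reducedness on $U$, yields reducedness. Completeness is immediate: $C$ is projective, so $C^{[d]}$ is closed in the Hilbert scheme of the ambient projective space.

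The main obstacle is irreducibility, i.e.\ showing that $U$ is dense in $C^{[d]}$. This reduces to a local claim at each planar singularity $p \in C$: the punctual Hilbert scheme $\mathrm{Hilb}^{k}(C, p)$ lies in the closure of the locus of configurations of $k$ distinct smooth points. I would attack this through the versal deformation of $(C, p)$, deforming the singularity to a smooth germ while carrying the chosen length-$k$ subscheme along, then specializing back; the existence of smooth nearby fibres for a versal deformation of a planar curve singularity is classical. An alternative is to work inside a fixed singular curve and use the $\mathbb{G}_m$-action by dilation on $\mathcal{O}_{C, p}$, which contracts the punctual Hilbert scheme onto a distinguished fibre and lets one connect any length-$k$ subscheme to a smoothable one, in the spirit of Brian\c{c}on's theorem on the punctual Hilbert scheme of the plane. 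The planarity hypothesis is indispensable throughout: in embedding dimension $\geq 3$ the ambient $S^{[d]}$ is no longer smooth and both the LCI and dimension arguments collapse.
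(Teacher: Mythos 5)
First, note that the paper does not prove this statement: it is quoted directly from \cite{AIK} and \cite{BGS}, so your attempt can only be measured against the standard arguments in those references. Your opening move is exactly the standard one --- realize $C^{[d]}$ locally as the zero locus of the section of the tautological rank-$d$ bundle on the smooth $2d$-fold $S^{[d]}$ induced by a local equation $f$ of $C$ in a smooth surface $S$, so that every irreducible component of $C^{[d]}$ has dimension at least $d$ by Krull's height theorem, and $C^{[d]}$ is l.c.i.\ wherever its dimension is exactly $d$. Completeness, and the deduction of reducedness from the l.c.i.\ property (Cohen--Macaulay plus generic reducedness along the open locus $U$ of reduced subschemes in the smooth locus), are also fine \emph{once} irreducibility and the dimension count are in place.

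There are, however, two genuine gaps. The smaller one: your claim that the presence of one $d$-dimensional component ``forces pure dimension $d$ globally'' is a non sequitur. The zero locus of $d$ functions on a smooth $2d$-fold only satisfies $\dim \ge d$ componentwise; nothing prevents a component of dimension $>d$ from coexisting with one of dimension exactly $d$. Purity must be deduced from irreducibility (density of $U$), not the other way around. The larger one: neither of your proposed attacks on irreducibility closes. Deforming $(C,p)$ to a smooth germ via the versal deformation says nothing about whether a punctual subscheme of the \emph{fixed} curve $C$ is a limit of reduced subschemes of $C$ itself --- the singular point carrying your subscheme disappears in the deformation, and there is no mechanism for ``specializing back'' inside $C$. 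The $\GG_m$-dilation argument requires the singularity to be quasi-homogeneous, which a general planar singularity is not. The missing ingredient, and the actual content of \cite{AIK,BGS}, is the dimension estimate $\dim \Hilb^n(C,p) \le n-1$ for $n \ge 1$ at a planar curve singularity (a relative of Brian\c{c}on's theorem on the punctual Hilbert scheme of $\CC^2$). Granting it, the stratification of $C^{[d]}$ by the colengths at the singular points shows that every stratum other than $U$ has dimension $<d$; combined with the Krull lower bound $\ge d$, every component must meet $U$, whence $C^{[d]} = \overline{U}$ is irreducible of dimension $d$, and the rest of your argument then goes through.
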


We systematically employ versal deformations of curve singularities.  
We will always mean this in the sense of analytic spaces,
see \cite{GLS} for a thorough treatment.  The base of a versal deformation
of a plane curve singularity is smooth.  If
$\pi: \cC \to B$ is a family of curves, we say it is {\em locally versal} at
$b$ if it induces versal deformations of all the singularities of $\cC_b$. This last condition may be 
rephrased as follows (\cite{FGvS} Section A): letting $\overline{\VV}(\cC_b)$ be the product
of the versal deformations of the singularities of $\cC_b$, there is a natural 
tangent map $T_bB \to T_b\overline{\VV}(\cC_b)$  at $b$ coming from the local-to-global spectral sequence for 
first order deformations of $\cC_b$. The family is locally versal if this tangent map is surjective.
 Such families have
in particular the following properties:

\begin{theorem} \label{thm:versal}
  \cite{DH,T}.  Let 
  $\pi: \cC \to B$ be a family of curves.  The cogenus is an upper
  semicontinuous function on $B$.  Local versality is an open condition,
  and in a locally versal family the locus of curves of cogenus
  at least $\delta$ is equal to the closure of the locus of $\delta$-nodal 
  curves.  In particular,  the locus of curves of cogenus
  $\delta$ in a locally versal family has codimension $\delta$. 
\end{theorem}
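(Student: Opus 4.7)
The strategy is to reduce each global assertion to a purely local statement at the singularities of $\cC_b$ inside their semi-universal deformation spaces, and then to invoke the classical local theory of Teissier on equigeneric strata together with a density argument for nodal curves in the spirit of Diaz--Harris.

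First, upper semicontinuity of the cogenus. Since $\pi$ is flat and proper, the arithmetic genus is locally constant on $B$, so it suffices to see that $\delta$ is additive over singularities via the normalization exact sequence, and that each local invariant $\delta_p = \dim_{\CC}(\widetilde{\oO}_p/\oO_p)$ is upper semicontinuous in an analytic neighborhood where $p$ deforms. The latter follows because $\delta_p$ equals the length at $p$ of a coherent sheaf on $\cC$ supported on the relative singular locus of $\pi$, and lengths of coherent sheaves jump up under specialization.

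Next, openness of local versality. By deformation theory, versality at $b$ is equivalent to surjectivity of the Kodaira--Spencer map
\[
T_b B \longrightarrow \bigoplus_{p \in \mathrm{Sing}(\cC_b)} T^1_{(\cC_b,p)}.
\]
The target assembles into the stalks of a coherent sheaf on $B$ supported on the image of the relative singular locus, while the source is the tangent sheaf of the smooth base $B$; surjectivity of a morphism of coherent sheaves is an open condition.

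For the equality of loci and the codimension count, fix $b \in B$ with singular points $p_1,\dots,p_k$ of local cogenera $\delta_1,\dots,\delta_k$. Local versality, together with smoothness of the base of the semi-universal deformation of each plane curve singularity, yields an analytic neighborhood of $b$ in $B$ equipped with a smooth morphism onto the product $\prod_i \Delta_i$ of these semi-universal deformations, compatible with $\pi$. It therefore suffices to prove the following local statement inside each $\Delta_i$: the $\delta$-constant stratum is (the closure of) a locally closed subvariety of codimension $\delta_i$, inside which the $\delta_i$-nodal deformations are dense. Pulling back and taking products then produces the global statement and the codimension $\delta$ claim.

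The main obstacle is this local statement, which is the heart of the theorem. The codimension assertion is Teissier's theorem on the $\delta$-constant stratum of a plane curve singularity, proved via his theory of simultaneous normalization (the $\delta$-constant locus is precisely where the normalization extends to a simultaneous resolution, and one computes its codimension by comparing tangent spaces). Density of $\delta_i$-nodal curves can then be obtained by a Diaz--Harris style induction on $\delta_i$: any first-order deformation tangent to the equigeneric stratum can be integrated to a curve whose $\delta$-invariant is preserved, and smoothing within this stratum can be used to degenerate higher singularities to nodes one at a time, while each node can subsequently be smoothed independently inside the equigeneric stratum. This forces the $\delta_i$-nodal locus to be open and dense in the codimension-$\delta_i$ stratum, which is exactly what we need.
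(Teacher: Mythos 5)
Your proposal is correct and takes essentially the same route as the paper, which states this result as background and defers entirely to the cited local theorems: Teissier's theory of simultaneous normalization for the codimension of the equigeneric ($\delta$-constant) stratum and the Diaz--Harris analysis for the density of nodal curves, with the reduction to the product of semi-universal deformations of the singularities supplying the global statement. The standard glue you add (local constancy of arithmetic genus, openness of surjectivity onto the relative $T^1$ sheaf) is the expected bookkeeping and raises no new issues.
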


Any curve singularity can be found on a rational curve; for an explicit 
construction see e.g. \cite{L}.  Moreover, if $\mathcal{C} \to B$ is a
family of curves, then locally near $b \in B$ one can find a different
family $\mathcal{C}' \to B$ such that $\mathcal{C}'_b$ is rational with
the same singularities as $\mathcal{C}_b$ and the two families induce
the same deformations of the singularities of the central fibre.  

\begin{proposition} \label{prop:splitting} \cite{FGvS}.
  The map from the base of a versal deformation of an integral locally planar
  curve to the product of the versal deformations of its singularities
  is a smooth surjection. 
\end{proposition}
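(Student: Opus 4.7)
The plan is to verify that both source and target of the map are smooth analytic germs and that the induced map on Zariski tangent spaces is surjective; the morphism is then a smooth surjection by the holomorphic implicit function theorem.

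Smoothness of the target is the easy half: each singularity $p \in C$ is a plane curve singularity, hence a hypersurface, so $T^2_{C,p} = 0$, and the versal deformation base of the singularity is smooth of dimension equal to the Tjurina number $\tau(C,p) = \dim_{\CC} T^1_{C,p}$; the product of smooth germs is smooth. For smoothness of the source, I would show that $\Ext^2(\Omega_C, \oO_C)$, which houses the obstructions to deforming $C$, vanishes. Plug $\dim C = 1$ and the fact that $\mathcal{E}xt^1(\Omega_C, \oO_C)$ is a skyscraper supported on the singular locus into the local-to-global spectral sequence
\[
E_2^{p,q} = H^p\bigl(C, \mathcal{E}xt^q(\Omega_C, \oO_C)\bigr) \Rightarrow \Ext^{p+q}(\Omega_C, \oO_C);
\]
then $H^{\ge 2}(T_C) = 0$ for dimension reasons while $H^{\ge 1}$ of a skyscraper vanishes, so $\Ext^2(\Omega_C, \oO_C) = 0$, as needed.

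From the low-degree part of the same spectral sequence one reads
\[
0 \to H^1(T_C) \to \Ext^1(\Omega_C, \oO_C) \to H^0\bigl(\mathcal{E}xt^1(\Omega_C, \oO_C)\bigr) \to H^2(T_C) = 0,
\]
and the identification $H^0(\mathcal{E}xt^1(\Omega_C, \oO_C)) = \bigoplus_p T^1_{C,p}$ recognizes the middle arrow as precisely the tangent map to the forgetful morphism from global deformations of $C$ to deformations of the local singularities. Hence the tangent map is surjective, and together with smoothness of both germs this yields the smooth surjection.

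The main conceptual point to verify is that the arrow produced by the spectral sequence really coincides with the geometric tangent map between versal bases, rather than with some abstractly similar-looking map. This is the standard naturality statement, which I would check by realizing both maps as sending a first-order deformation of $C$ to its restriction to the formal neighborhood of each singularity; once this identification is in hand, the rest of the argument is essentially bookkeeping.
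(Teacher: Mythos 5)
Your argument is correct and is essentially the standard proof of this statement, which the paper itself does not prove but attributes to \cite{FGvS}; the vanishing $\Ext^2(\Omega_C,\oO_C)=0$ via the local-to-global spectral sequence (using that $C$ is a curve and that planar singularities are hypersurfaces, so $\mathcal{E}xt^{\ge 2}$ vanishes and the local bases are smooth of dimension $\tau$) together with the surjection $\Ext^1(\Omega_C,\oO_C)\to H^0(\mathcal{E}xt^1(\Omega_C,\oO_C))$ forced by $H^2(T_C)=0$ is exactly the mechanism in the cited reference. The naturality point you flag at the end --- that the edge map is the localization of a first-order deformation to the singularity germs --- is indeed the only thing left to check, and it holds as you describe.
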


\begin{corollary} \label{cor:splitting} 
  Let $\pi: \cC \to B$ be a family of curves.  Fix $b \in B$, and let
  $\overline{\cC_b}$ be the normalization of $\cC_b$.  Then there exists
  a neighborhood $b \in U \subset B$ and a family $\pi':\cC' \to U$ such that
  $\cC'_b$ is rational with the same singularities as $\cC_b$, and
  $\cC$ and $\cC'$ induce the same deformations of these
  singularities on $U$, and in particular have the same discriminant
  locus.  Moreover, on $U$, we have an equality of local systems
  $\mathrm{R}^1 \widetilde{\pi}_* \CC = 
  \mathrm{R}^1 \widetilde{\pi}'_* \CC 
  \bigoplus \mathrm{H}^1(\overline{\cC_b})$,
  the latter summand meaning the constant local system with the specified fibre.
\end{corollary}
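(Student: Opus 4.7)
\textbf{Construction of $\cC'$.} As remarked in the paragraph preceding Proposition \ref{prop:splitting}, any collection of planar curve singularities can be realized on a single rational curve; I pick such a $C'$ whose singularities match those of $\cC_b$ analytically, and take a versal deformation $\widetilde{\cC}' \to \widetilde{B}'$ of $C'$. By Proposition \ref{prop:splitting} the classifying map $\widetilde{B}' \to \prod_i \mathrm{Def}(p_i)$ is a smooth surjection of analytic germs, and the given family $\cC \to B$ near $b$ defines its own classifying map $\varphi: U \to \prod_i \mathrm{Def}(p_i)$. Since a smooth morphism of complex analytic spaces admits a local section through any chosen point of the fiber, I can lift $\varphi$ through the special point of $\widetilde{B}'$ to a map $\psi: U \to \widetilde{B}'$ with $\psi(b) = \widetilde{b}$, and set $\cC' := \psi^* \widetilde{\cC}'$. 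By construction $\cC$ and $\cC'$ have the same classifying map to $\prod_i \mathrm{Def}(p_i)$, hence the same deformations of singularities and, consequently, the same discriminant.

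\textbf{Decomposition of the local system.} Shrinking $U$ to a contractible analytic ball, I choose small closed Milnor balls $D_i \subset \cC$ and $D'_i \subset \cC'$ around each $p_i \in \mathrm{Sing}(\cC_b) = \mathrm{Sing}(\cC'_b)$. By Ehresmann applied to the proper submersions away from the singularities, the complements $N := \cC \setminus \bigsqcup_i \mathring{D}_i \to U$ and $N' := \cC' \setminus \bigsqcup_i \mathring{D}'_i \to U$ are trivial fibrations, with fibers diffeomorphic respectively to $\overline{\cC_b}$ and $\PP^1$, each with $r$ open disks removed (where $r$ is the total number of branches at singularities of $\cC_b$). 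Because $\cC$ and $\cC'$ induce the same deformations of the singularities, I can arrange the $D_i, D'_i$ so that the Milnor fibrations $(D_i, \cC_t \cap D_i)_{t \in U}$ and $(D'_i, \cC'_t \cap D'_i)_{t \in U}$ are identified as fibrations over $U$.

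Mayer--Vietoris for $\cC_t = N_t \cup \bigsqcup_i (\cC_t \cap D_i)$ and its analogue for $\cC'_t$, applied over $t \in \widetilde{U}$, then produces long exact sequences that agree on the terms coming from the Milnor fibres and their boundary circles, and differ only by the summand $\mathrm{H}^1(\overline{\cC_b}) \subset \mathrm{H}^1(N_t)$, which is a constant local system on $\widetilde{U}$ because $N$ is a trivial fibration. Comparing the two sequences yields the asserted splitting. I expect the main technical difficulty to lie in showing that the fiberwise isomorphism is compatible with monodromy around loops in $\widetilde{U}$, i.e.\ descends to an identification of local systems; this will follow because the Milnor fibrations and their global identifications are intrinsically determined by the analytic germs of $\cC$ and $\cC'$ at the singular points, which coincide by the construction of $\psi$.
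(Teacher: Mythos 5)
Your construction of $\cC'$ coincides with the paper's: realize the singularities of $\cC_b$ on a rational curve $C'$, use Proposition \ref{prop:splitting} to choose a local section of the smooth surjection $\VV(C')\to\overline{\VV}(\cC_b)$, compose with the classifying map of $\cC$, and pull back. That part is fine. The gap is in the second half. Mayer--Vietoris for $\cC_t=N_t\cup\bigsqcup_i(\cC_t\cap D_i)$ does not yield a direct sum decomposition of $\mathrm{H}^1(\cC_t)$: it yields a short exact sequence of local systems with subobject $\mathrm{coker}\bigl(\mathrm{H}^0(N_t)\oplus\mathrm{H}^0(A_t)\to\mathrm{H}^0(\partial_t)\bigr)$ and quotient $\ker\bigl(\mathrm{H}^1(N_t)\oplus\mathrm{H}^1(A_t)\to\mathrm{H}^1(\partial_t)\bigr)$. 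Comparing this with the primed sequence shows only that $\mathrm{R}^1\widetilde{\pi}_*\CC$ and $\mathrm{R}^1\widetilde{\pi}'_*\CC\oplus\mathrm{H}^1(\overline{\cC_b})$ carry filtrations with isomorphic graded pieces, which is strictly weaker than the asserted equality of local systems. You cannot dismiss the extension problem: the monodromy of $\mathrm{R}^1\widetilde{\pi}_*\CC$ is genuinely non-semisimple (Picard--Lefschetz transvections are unipotent), and $\mathrm{H}^1$ of $U\setminus\Delta$ is nonzero, so even extensions of constant systems by constant systems need not split here. The splitting matters downstream: Lemma \ref{lem:estimate} takes exterior powers and needs the primed systems to occur as honest summands, and IC sheaves do not behave well under extensions. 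The difficulty you flag (monodromy-compatibility of the fiberwise identifications) is real but secondary to this one.

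The paper's proof supplies precisely the missing splitting device, and it is worth internalizing: it defines $\vV\subset\mathrm{R}^1\widetilde{\pi}_*\CC$ fibrewise as the kernel of the composite $\mathrm{H}^1(\cC_{\tilde b})\to\mathrm{H}^1(\cC_b)\to\mathrm{H}^1(\overline{\cC_b})$, observes that the cup-product symplectic form is nondegenerate on $\vV$, and takes the orthogonal complement $\vV^\perp$ with respect to this monodromy-invariant pairing --- a canonical complement of local systems. Since every vanishing cycle lies in $\vV$, Picard--Lefschetz shows the monodromy acts trivially on $\vV^\perp$, which therefore extends to the constant system $\mathrm{H}^1(\overline{\cC_b})$ over all of $U$, while $\vV$ depends only on the induced deformations of the singularities and hence agrees for $\cC$ and $\cC'$. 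If you wish to keep your Milnor-ball picture, you must at minimum import the intersection form to manufacture the invariant complement; as written, the splitting is asserted rather than proved.
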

\begin{proof}
  Let $C'$ be a rational curve with the same 
  singularities as $\cC_b$; let $\mathcal{C'} \to \VV(C')$ be a versal
  deformation of $C'$, and, as above, let $\overline{\VV}(\cC_b)$ be the product
  of the versal deformations of the singularities of $\cC_b$.  
  We identify  $\overline{\VV}(\cC_b)=\overline{\VV}(C')$, and, 
  by Proposition \ref{prop:splitting}, the map
  $\VV(C') \to \overline{\VV}(\cC_b)$ is a smooth surjection, 
  so we may choose, possibly after shrinking,
  a local section $\sigma$.  Pulling back, again possibly after shrinking $B$, the family $\cC' \to \VV(C')$  to $B$ 
via the composition 
  \[
B \to \VV(\cC_b) \to \overline{\VV}(\cC_b) \stackrel{\sigma}{\to} \VV(C')  
    \]
  we obtain a family of rational
  curves $\pi': \cC'_B \to B$.

  Shrink $U \subset B$ further so that 
  the inclusion $\cC_b \to \cC|_{U}$ is a homotopy equivalence, and let 
  $\tilde{b} \in U$ be a point with  smooth fibre $\cC_{\tilde{b}}$.  Let
  $\vV$ be the summand of $R^1 \widetilde{\pi}_* \CC$ whose fibre at
  $\tilde{b}$ is the kernel of the composition of the
  specialization map $\mathrm{H}^1(\cC_{\tilde{b}}) \to 
  \mathrm{H}^1(\cC_b)$ with the pullback to the normalization
  $\mathrm{H}^1(\cC_b) \to \mathrm{H}^1(\overline{\cC_b})$.  
  This is a symplectic summand, let $\vV^\perp$ be its orthogonal complement.
  As $\vV$ contains all vanishing cycles,
  the Picard-Lefschetz formula ensures $\vV^\perp$ has trivial monodromy 
  and thus extends extends to a trivial
  local system over $B$ with 
  fibre $\vV^\perp_b = \mathrm{H}^1(\overline{\cC}_b)$.  On the other hand, 
  $\vV$ depends only
  on the deformation of the singularities, which is the same in
  $\cC$ and $\cC'$. 
\end{proof}

To make use of such a replacement, it is necessary to know that
the relative Hilbert scheme $\mathcal{C}'^{[d]}$ is smooth if
$\mathcal{C}^{[d]}$ is.  This follows from results of the second author
on the smoothness of relative Hilbert schemes \cite{S}, which we
now review. Recall that in force of our conventions, the base $B$ 
of a family is always supposed 
to be smooth.

\begin{proposition} \label{prop:down} \cite[Prop. 14]{S}
  Let $\pi:\cC \to B$ be a family of curves.  If $\cC^{[d]}$ is smooth,
  then $\cC^{[n]}$ is smooth for any $n \le d$. 
\end{proposition}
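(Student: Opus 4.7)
The plan is to exploit the étale-local factorization of the relative Hilbert scheme at a subscheme with disjoint support: if we can augment a point $[Z_n]\in\cC^{[n]}$ to a point $[Z_d]\in\cC^{[d]}$ by adding smooth points, then smoothness of $\cC^{[d]}$ at the augmented point will force smoothness of $\cC^{[n]}$ at $[Z_n]$.

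Concretely, I would fix $[Z_n]\in\cC^{[n]}$ lying over $b\in B$, and, using that $\cC_b$ is an integral curve with only finitely many singularities, choose $d-n$ additional distinct smooth points $q_1,\ldots,q_{d-n}$ of $\cC_b$ disjoint from the support of $Z_n$. Setting $Z_d:=Z_n+q_1+\cdots+q_{d-n}\in\cC^{[d]}$, the disjoint-support factorization of the Hilbert scheme supplies an étale-local isomorphism
\[
  \cC^{[d]}\;\cong\;\cC^{[n]}\times_B V_1\times_B\cdots\times_B V_{d-n}
\]
near $[Z_d]$, where $V_j$ is an étale neighborhood of $q_j$ in $\cC$ (and $V_j^{[1]}=V_j$). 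Since $q_j$ is a smooth point of $\cC_b$ and $B$ is smooth, $\cC$ is smooth at $q_j$ and the projection $V_j\to B$ is smooth of relative dimension one there. The right-hand side is therefore étale-locally isomorphic to $\cC^{[n]}\times\mathbb{A}^{d-n}$ near the distinguished point, and smoothness of this product at the distinguished point is equivalent to smoothness of $\cC^{[n]}$ at $[Z_n]$. Since $[Z_n]$ was arbitrary, this completes the argument.

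The only substantive input is the local factorization itself; I expect this to follow formally. Any sufficiently small deformation of $Z_d$ remains supported in a union of disjoint small neighborhoods of the support points of $Z_d$, and so decomposes canonically into flat components of the prescribed lengths at each neighborhood; conversely, such a tuple of components assembles uniquely into a length-$d$ subscheme because the neighborhoods are disjoint. This gives an isomorphism of the relevant Hilbert functors on a small étale neighborhood, and therefore of the representing schemes — which is all that the argument above needs.
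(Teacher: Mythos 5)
Your argument is correct. The paper does not prove this proposition itself --- it is quoted from \cite[Prop.~14]{S} --- but your proof is the standard and, as far as I can tell, complete one: the locus of $Z\in\cC^{[d]}$ contained in a disjoint union of opens $W_0\sqcup W_1\sqcup\cdots\sqcup W_{d-n}$ with prescribed degrees is open (by properness of $\pi$) and canonically isomorphic to the fibre product $W_0^{[n]}\times_B W_1\times_B\cdots\times_B W_{d-n}$, and since $\pi$ is smooth at each auxiliary point $q_j$ this is \'etale-locally $\cC^{[n]}\times\AA^{d-n}$, so smoothness descends to the factor $\cC^{[n]}$ at $[Z_n]$. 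The only ingredients you use --- that an integral fibre has a dense open smooth locus from which to draw the $q_j$, and the disjoint-support factorization of the relative Hilbert functor --- are both justified in your write-up, so I see no gap.
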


 In particular, taking $n=1$, we have that $\cC$ must be smooth.

  \begin{theorem} \label{thm:smoothness}
    Let $\cC \to B$ be a family of curves.  
    For $b \in B$, let $I$ be the image of
    $T_b B$ in
    the product of the first-order deformations of the singularities
    of $\cC_b$.  Then: 
    \begin{enumerate}
    \item The smoothness of $\cC^{[d]}$ along
      $\cC^{[d]}_b$ depends only on $I$.
    \item If $\cC^{[d]}$ is smooth along $\cC^{[d]}_b$, then
      $\dim I \ge \min(d, \delta(\cC_b))$. 
    \item 
      If $\dim I \ge d$ and $I$ is general
      among such subspaces, $\cC^{[d]}$
      is smooth along $\cC^{[d]}_b$.
    \item  
      $\cC^{[d]}$
      is smooth along $\cC^{[d]}_b$ for all $d$ if and only if
      $I$ is transverse to the image of the ``equigeneric ideal''. 
      It 
      suffices for $I$ to be generic of dimension at least $\delta(\cC_b)$.
    \end{enumerate}
  \end{theorem}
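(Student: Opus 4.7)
The approach is to reduce smoothness of $\cC^{[d]}$ at a length-$d$ subscheme $Z\subset \cC_b$ to a linear-algebra condition on $I$ via the classifying map $B \to V:=\prod_i V(p_i)$ to the product of versal deformation bases of the singularities of $\cC_b$. One decomposes $Z=\bigsqcup_i Z_i\sqcup Z_{\mathrm{sm}}$ according to which singularity each piece supports (or whether it lies in the smooth locus), with local lengths $d_i$ summing to at most $d$. Standard deformation-theoretic considerations then yield a formal-local factorization
\[
\widehat{\cC^{[d]}}_Z \;\simeq\; \widehat{B}_b \,\times_{\widehat V}\, \prod_i \widehat{H}_i \;\times\; (\text{smooth factor from }Z_{\mathrm{sm}}),
\]
where $\widehat{H}_i$ is the germ at $Z_i$ of the relative Hilbert scheme of length-$d_i$ subschemes in the versal family of $p_i$; this uses Proposition~\ref{prop:splitting} and the local nature of the Hilbert functor.

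Next, each $\widehat{H}_i$ is smooth of dimension $d_i+\dim V(p_i)$. This rests on the fact that the versal total space is a smooth surface, and that the Hilbert scheme of length-$d_i$ subschemes lying in fibres of a family of curves inside a smooth surface is smooth (the framework of \cite{S} underlying Proposition~\ref{prop:down}). Write $\phi_i:\widehat{H}_i\to\widehat V(p_i)$ and let $W_i(Z_i)\subset T_0 V(p_i)$ be the image of $d\phi_i$ at $Z_i$. A tangent-space computation, together with the codimension statement for the $\delta_i$-constant locus in Theorem~\ref{thm:versal}, gives
\[
\mathrm{codim}_{T_0 V(p_i)}\, W_i(Z_i) \;\le\; \min(d_i,\delta_i),
\]
with equality attainable by choosing $Z_i$ suitably supported in the conductor. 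Set $W(Z)=\prod_i W_i(Z_i)\subset T_0 V$.

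The fibre-product description now yields that $\cC^{[d]}$ is smooth at $(Z,b)$ exactly when $I+W(Z)=T_0 V$, a condition depending only on $I$; this is (1). Part (2) follows by choosing $Z$ so concentrated that $\mathrm{codim}\,W(Z)=\min(d,\delta)$, which the bound allows. For (3), since $\mathrm{codim}\,W(Z)\le\sum_i\min(d_i,\delta_i)\le d$, and since the possible $W(Z)$ form finitely many strata as $Z$ varies, an open Schubert condition on the $d$-plane $I$ ensures $I+W(Z)=T_0 V$ for every $Z$. Part (4) requires identifying $\bigcap_{d,Z}W(Z)$ as the tangent space to the equigeneric stratum in $V$, cut out by the ``equigeneric ideal'' of \cite{FGvS}; smoothness for all $d$ is then equivalent to $I$ being transverse to this codimension-$\delta$ subspace, and a generic $I$ with $\dim I\ge\delta$ achieves this.

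\textbf{Main obstacle.} The crux is the tangent-space analysis: establishing $\mathrm{codim}\,W_i(Z_i)\le\min(d_i,\delta_i)$ with equality attainable. This requires understanding the Severi-type stratification of the Hilbert scheme of a planar singularity and relating the length of a subscheme to the $\delta$-invariant through the conductor. Part (4) further depends on the intrinsic identification of the equigeneric ideal, which is the substantive content of \cite{FGvS}.
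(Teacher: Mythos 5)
Your setup for part (1) --- the formal-local fibre-product description of $\cC^{[d]}$ near $Z$ over the product of miniversal bases, with the smooth factors $\widehat{H}_i$ coming from the versal families --- is essentially the paper's argument, and the conclusion that smoothness depends only on $I$ is reached the same way (the paper phrases it via reducedness of the fibres of $\widehat{H}_i \to \widehat{\VV}_i$ rather than your explicit transversality criterion $I + W(Z) = T_0 V$, but these amount to the same thing). The genuine gap is in part (2): everything there rests on your claim that one can choose $Z_i$ "suitably supported in the conductor" so that $\mathrm{codim}\,W_i(Z_i) = \min(d_i,\delta_i)$, and you correctly identify this as the crux but do not prove it. This attainability statement is not a routine tangent-space computation at a general planar singularity; it is precisely the content of (2). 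The paper avoids it entirely by a degeneration trick: after identifying $B$ with a slice of the miniversal base and writing a neighbourhood as $B \times \DD$, openness of smoothness lets one shrink $\DD$ so that every slice $B \times \epsilon$ still has smooth relative Hilbert scheme; by Theorem \ref{thm:versal} some such slice contains a $\delta(\cC_b)$-nodal curve $\cC_p$, and at a subscheme $z$ supported at $d\le\delta$ nodes the Zariski tangent space $T_z\cC_p^{[d]}$ is visibly $2d$-dimensional, so $\cC_p^{[d]}$ cannot be smoothed over a base of dimension $<d$. That is, the tangent-space estimate is only ever needed for nodes, where it is trivial; your route would require a genuine analysis of the colength of the conductor that you have not supplied.

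Two smaller points. For (3) the paper simply cites \cite{S} (Theorem B); your sketch via "finitely many strata of $W(Z)$" has its own unproven step, since a priori $W(Z)$ varies continuously with $Z$ and one needs a dimension count, not finiteness, to conclude that a generic $I$ is transverse to all of them at once. For (4) the paper again cites \cite{FGvS} (stated there for the compactified Jacobian) and transfers the statement to $\cC^{[d]}$ via the Abel--Jacobi projective bundle for $d\gg 0$ together with Proposition \ref{prop:down} for small $d$; your proposed identification of $\bigcap_Z W(Z)$ with the tangent space cut out by the equigeneric ideal is a reasonable gloss on what \cite{FGvS} proves, but it is not an argument. In short: the architecture of (1) is right and matches the paper, but (2) as written is incomplete, and the paper's nodal-degeneration argument is the missing ingredient you should adopt.
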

  \begin{proof}
    For (1),
    take $z$ a subscheme of $\cC^{[d]}_b$ which decomposes as 
    $z = \coprod z_i$ into subschemes of lengths $d_i$ supported at points
    $c_i$.   Let $(\overline{\cC}_i, c_i) \to (\VV_i, 0)$ 
    be miniversal deformations
    of the curve singularities $(\cC_b, c_i)$ and 
    $(B,b) \to \prod (\VV_i, 0)$ a map along which the multi-germ
    $\coprod (\cC, c_i) \to (B,b)$ pulls back.  Then analytically 
    locally the germ $(\cC^{[d]}, [z])$ pulls back 
    from $\prod (\overline{\cC}_i^{[d_i]},[z_i])$ along the same map.  
    As the fibres of $(\overline{\cC}_i^{[d_i]}, [z_i]) \to (\VV_i, 0)$ are
    reduced of dimension $d_i$ by \cite{AIK,BGS} 
    and the total space is smooth by \cite[Prop. 17]{S}, 
    the smoothness
    of the pullback depends only on the image of 
    $T_b B$ in $\prod T_0 \VV_i$, which is well defined as the $\VV_i$ were taken
    miniversal.  The miniversal deformation of
    the germ of a 
    curve at a smooth point being trivial, only the singularities contribute. 

    To check (2), we may by
    (1) assume the map $T_b B \to I$ is an isomorphism and then
    identify locally $B$ with its image in some representative 
    $\overline{B}$ of $\prod (\VV_i, 0)$.  
    Shrink $\overline{B}$ until it can be written as
    $B \times \DD$ for some polydisc $\DD$; by openness of smoothness we may
    shrink $\DD$ further until $\cC^{[d]}|_{B \times \epsilon}$ is smooth for all 
    $\epsilon \in \DD$.  It is known \cite{DH,T} that the locus of nodal
    curves with $\delta(\cC_b)$ nodes in $\prod \VV_i$ is nonempty 
    (and of codimension $\delta(\cC_b)$);  
    choose $\epsilon$ so the slice 
    $B \times \epsilon$ contains such a point $p$ corresponding to such
    a curve.  
    If $d \le \delta$, there is a point 
    $z \in \cC_p^{[d]}$ be a point naming a subscheme
    supported at $d$ nodes.  The Zariski tangent space $T_z \cC_p^{[d]}$
    is $2d$ dimensional, so $\cC_p^{[d]}$ cannot be smoothed out over a base
    of dimension less than $d$. 

    For (3), again assume $B$ is embedded in $\overline{B} = \prod \VV_i$;
    now the situation is analytically locally smooth over that in 
    \cite[Thm. 19]{S}. Finally,
    (4) is stated in \cite{FGvS} for the compactified Jacobian; it follows for
    $\cC^{[d]}$ for $d \gg 0$ because this space fibres smoothly over the Jacobian, and
    for lower $d$ by Proposition \ref{prop:down}.  
  \end{proof}

\begin{corollary} \label{cor:codim}
  If $\cC \to B$ is a family of curves with $\cC^{[d]}$ smooth, then 
  for $\delta \le d$, the locus of curves with cogenus $\delta$ is
  of codimension at least $\delta$ in $B$. 
\end{corollary}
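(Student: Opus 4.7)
The plan is to argue locally near any $b \in B_{\geq \delta}$ by pulling back along the classifying map $\phi : (B, b) \to (\overline{B}, 0)$ to a versal deformation of the singularities of $\cC_b$ (Proposition \ref{prop:splitting}), combining the codimension statement of Theorem \ref{thm:versal} on $\overline{B}$ with the smoothness of $\cC^{[d]}$ at nodal subschemes to control the pullback. Fix $b$ with $\delta_0 := \delta(\cC_b) \geq \delta$. In a neighborhood of $b$ the cogenus is pulled back, so $B_{\geq \delta} = \phi^{-1}(\overline{B}_{\geq \delta})$ there; Theorem \ref{thm:versal} says $\overline{B}_{\geq \delta}$ has codimension $\delta$ and equals the closure of the $\delta$-nodal stratum, while Theorem \ref{thm:smoothness}(2) gives $\operatorname{rank}(d\phi|_b) \geq \min(d, \delta_0) \geq \delta$. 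Let $B' \subseteq B_{\geq \delta}$ be an irreducible component through $b$; the goal is $\dim B' \leq \dim B - \delta$.

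If $\phi|_{B'}$ is locally constant, so $B' \subseteq \phi^{-1}(0)$, then Krull's height theorem applied to $\delta$ of the pullbacks of coordinates on $\overline{B}$ whose differentials at $b$ are independent gives $\dim \phi^{-1}(0) \leq \dim B - \delta$, and we are done. Otherwise, choose $\delta' \geq \delta$ maximal with $\phi(B') \subseteq \overline{B}_{\geq \delta'}$; the density of the $\delta'$-nodal stratum in $\overline{B}_{\geq \delta'}$ (Theorem \ref{thm:versal}) then produces $b^* \in B'$ for which $\cC_{b^*}$ is a genuine $\delta'$-nodal curve. At $b^*$ the versal base is $\AA^{\delta'}$ with coordinates $t_1, \ldots, t_{\delta'}$ smoothing the respective nodes, and $\overline{B}_{\geq \delta}$ there equals $\bigcup_{|S| = \delta} \{t_i = 0 : i \in S\}$.

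When $\delta' \leq d$, Theorem \ref{thm:smoothness}(2) forces $\operatorname{rank}(d\phi_{b^*}|_{b^*}) \geq \delta' = \dim \AA^{\delta'}$, so $\phi_{b^*}$ is a submersion near $b^*$ and the preimage of the codimension-$\delta$ locus has codimension $\delta$ in $B$. When $\delta' > d$, for each $d$-subset $T \subseteq \{1, \ldots, \delta'\}$ let $z \in \cC_{b^*}^{[d]}$ be the length-$d$ subscheme supported at the nodes indexed by $T$; then $\dim T_z \cC_{b^*}^{[d]} = 2d$, so the tangent sequence
\[
0 \longrightarrow T_z \cC_{b^*}^{[d]} \longrightarrow T_z \cC^{[d]} \longrightarrow T_{b^*} B
\]
together with $\dim T_z \cC^{[d]} = d + \dim B$ (smoothness of $\cC^{[d]}$ at $z$) forces the image in $T_{b^*} B$ to have codimension $d$; equivalently, $\{d(\phi^* t_i)|_{b^*} : i \in T\}$ are $d$ linearly independent functionals. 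Since $\delta \leq d \leq \delta'$, every $\delta$-subset $S \subseteq \{1, \ldots, \delta'\}$ extends to such a $T$, so $\{\phi^* t_i : i \in S\}$ are independent at $b^*$ and $\phi^{-1}\{t_i = 0 : i \in S\}$ has codimension $\delta$. Taking the finite union over $S$ gives $\dim B_{\geq \delta} \leq \dim B - \delta$ near $b^*$, hence $\dim B' \leq \dim B - \delta$ by constancy of the dimension of an irreducible component along itself.

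The main obstacle is the transversality in the last case: a bare rank bound $\operatorname{rank}(d\phi|_b) \geq \delta$ would not preclude the image of $d\phi$ from lying inside a single coordinate hyperplane of $\AA^{\delta'}$, and then the pullback of the union of codimension-$\delta$ coordinate subspaces could in principle have codimension less than $\delta$. The smoothness of $\cC^{[d]}$, applied at $d$-nodal subschemes in the fiber, is precisely the input that upgrades the rank bound to the required independence statement for the pullbacks of any $d$ of the versal-deformation coordinates; locating the $\delta'$-nodal point $b^*$ via Theorem \ref{thm:versal} is what makes this tangent-space computation accessible.
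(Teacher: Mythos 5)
There is a genuine gap in the second case of your argument: the existence of the point $b^* \in B'$ at which $\cC_{b^*}$ is a genuine $\delta'$-nodal curve. Theorem \ref{thm:versal} says the $\delta'$-nodal stratum is dense in $\overline{B}_{\ge \delta'}$, but $\phi(B')$ is only some irreducible constructible subset of $\overline{B}_{\ge \delta'}$, and density in the ambient stratum does not give density in an arbitrary subvariety of it. Concretely, $\phi(B')$ could lie entirely inside an equisingular stratum of non-nodal curves (e.g.\ a locus of cuspidal curves sitting inside the discriminant, or more generally inside the closed complement of the nodal locus in $\overline{B}_{\ge \delta'}$, which has codimension $\ge \delta'+1$). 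In that situation your choice of $\delta'$ is still well defined, but no point of $B'$ has the local model $\AA^{\delta'}$ with coordinates smoothing individual nodes, and the entire tangent-space computation at $d$-nodal subschemes --- which is the engine of your Case 2 --- has nothing to bite on. Everything downstream of the existence of $b^*$ is fine (the identification of smoothness of $\cC^{[d]}$ at a subscheme supported on $d$ nodes with the linear independence of the $d(\phi^* t_i)$, $i \in T$, is correct and is essentially the computation in the paper's proof of Theorem \ref{thm:smoothness}(2)), but the argument never gets there in general. Case 1 is also fine as far as it goes, but note that the genuinely problematic situation is neither of your two cases in a usable form: $\phi|_{B'}$ non-constant with image confined to a non-nodal equisingular stratum.

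The paper avoids this entirely by turning the quantifiers around: instead of analyzing the versal image of the given family at a bad point, it supposes the codimension bound fails, cuts $B$ by a \emph{generic} smooth slice $B'$ of dimension $\delta-1$ (so that $\cC^{[d]}\times_B B'$ is still smooth and $B'$ still meets the cogenus-$\delta$ locus), and then reads off a contradiction directly from Theorem \ref{thm:smoothness}(2), since at an intersection point the image $I$ of $T_bB'$ would have to have dimension at least $\min(d,\delta)=\delta$ while $\dim T_b B' = \delta-1$. If you want to salvage your approach, you would need to either import that slicing trick (at which point you have reproduced the paper's proof) or supply an argument that the cogenus-$\ge\delta$ locus of $B$ cannot have an irreducible component of codimension $<\delta$ consisting entirely of non-nodal curves; the latter does not follow from the density statement you invoke.
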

\begin{proof}
  Suppose not; let $B'$ be a generic $\delta - 1$ dimensional smooth
  subvariety of $B$, then the restriction $\cC^{[d]} \times_B B'$ is smooth
  and $B'$ intersects the locus of curves of cogenus $\delta$.  This
  contradicts (2) of Theorem \ref{thm:smoothness}. 
\end{proof}

\noindent {\bf Remark}. Corollary \ref{cor:codim} explains why we do not 
require a ``$\delta$-regularity'' assumption as in \cite{N} -- in the case
of Hilbert schemes and Jacobians, it follows from smoothness of the total space.

\section{Estimates}

The following is a variation on the ``Goresky-MacPherson inequality" of \cite{N}, Section 7.3.

\begin{lemma} \label{lem:GM}
  Let $\pi:X \to Y$ be a locally projective 
  morphism of smooth varieties with irreducible fibres of dimension $n$. 
  Then  
  \begin{enumerate}
  \item  
 $ {}^p \mathrm{R}^j \pi_* \CC [\dim X]= 0$   {\rm if}  $|j| > n,$ {\rm and} 
$  {}^p \mathrm{R}^{\pm n} \pi_* \CC [\dim X]= \CC [\dim Y] $.
\medskip
  \item
 $ \hH^i ({}^p \mathrm{R}^j \pi_* \CC [\dim X])  = 0$ {\rm for} $|j |\neq n$  {\rm and}    $i  \ge n - \dim Y - | j | $.
  
  \end{enumerate}
  In particular, every summand of $\mathrm{R} \pi_* \CC$ is supported
  on a subvariety of codimension $< n$. 
\end{lemma}

\begin{proof}
  The first statement follows immediately from the fact that the fibres are connected of dimension $n$.
  The estimate is symmetric in $j$ and, by relative hard Lefschetz,
$  {}^p \mathrm{R}^j \pi_* \CC [\dim X] \simeq  {}^p \mathrm{R}^{-j} \pi_* \CC [\dim X]$,
  thus
   we may assume $j \ge 0$. 
  We check at a point $y \in Y$, 
  where by \cite{BBD},   
  $\hH^i({}^p \mathrm{R}^j \pi_* \CC[\dim X])_y$
  is a summand of $\mathrm{H}^{i+j + \dim X}(X_y, \CC)$.  This vanishes for dimension
  reasons if $i+j+ \dim X=i+j+ \dim Y +n > 2 \dim X_y =2n$.
  Finally,
  as the fibres
  are irreducible,  $R^{2n} \pi_* \CC \simeq \CC$. This top dimensional cohomology 
  is already accounted for by the summand ${}^p \mathrm{R}^n \pi_* \CC[
  \dim X] = \CC[\dim Y]$
  and thus the vanishing for $j = i$ is ensured.  The final statement follows
  because a summand supported on a subvariety $Y'$ is the IC sheaf associated
  to some local system on an open subset of $Y'$ and consequently the stalk of the cohomology sheaf
  in degree $-\dim Y'$ is non zero on a general point of $Y'$; this is prohibited by the stated estimate when 
  $\dim Y - \dim Y'  \ge n$. 
\end{proof}

\begin{lemma} \label{lem:estimate}
  Let $\pi:\cC \to B$ be a family of curves such that $\cC^{[d]}$ is smooth. 
  Then  for $i > 0$, and for every $j$, the support of the sheaf
  $\hH^{i}(\mathrm{IC}(B,\mathrm{R}^j\widetilde{\pi}^{[d]}_* 
  \CC)[-\dim B])$ is contained in the locus of curves of cogenus
  $> i$. 
\end{lemma}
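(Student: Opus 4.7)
The plan is to combine the standard IC support axiom---that $\dim\mathrm{Supp}\,\hH^i(F) < \dim B - i$ for $i > 0$, where $F = \mathrm{IC}(B,\mathrm{R}^j\widetilde{\pi}^{[d]}_*\CC)[-\dim B]$---with Corollary \ref{cor:codim} and a suitable constructibility property of $F$. First I would reduce using Macdonald's formula \eqref{eq:macdonald}, which expresses $\mathrm{R}^j\widetilde{\pi}^{[d]}_*\CC$ as a direct sum of $\bigwedge^{j-2k}\mathrm{R}^1\widetilde{\pi}_*\CC$'s; since IC respects direct sums, the problem reduces to local systems of the form $L = \bigwedge^m\mathrm{R}^1\widetilde{\pi}_*\CC$. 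Corollary \ref{cor:splitting} then allows a local replacement of $\cC$ by a family of rational curves with the same singularities, splitting off a trivial summand $\mathrm{H}^1(\overline{\cC_b})$ and leaving a local system depending only on the deformations of the singularities of $\cC_b$.

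Next I would upgrade Corollary \ref{cor:codim} from codim $\geq \delta$ to codim exactly $\delta$ for the cogenus $\delta$ stratum. The argument: at a point $b$ of cogenus $\delta$, the local analytic map $B \to \prod\VV_i$ to the product of versal deformation spaces of the singularities of $\cC_b$ pulls back the codim-$\delta$ cogenus-$\geq\delta$ locus in $\prod\VV_i$ to a subvariety of codim at most $\delta$ in $B$ by standard intersection theory, and Corollary \ref{cor:codim} forces equality. Combined with constructibility of $F$ with respect to the equisingular stratification (coming from the classical monodromy theory of families of planar curves and the Picard--Lefschetz formula), the support of $\hH^i(F)$ is a union of closures of equisingular strata, each necessarily of codim $> i$ in $B$.

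The main obstacle is that equisingular substrata of a cogenus $\delta$ stratum can themselves have codimension in $B$ strictly greater than $\delta$---for instance the cuspidal locus sits inside the codim-$1$ nodal stratum of a locally versal family, with codim $2$---so an equisingular substratum of cogenus $\delta \leq i$ but codim $> i$ is a priori compatible with the IC support axiom and would violate the lemma. Ruling out such ``thin'' contributions will be the hard part. I expect to handle it by induction on $\dim B$ (or equivalently on the cogenus at the target point): at a generic point $t$ of such a thin substratum, select via Theorem \ref{thm:smoothness}(3) a smooth subvariety $B' \subset B$ through $t$ of appropriately small dimension preserving smoothness of $\cC^{[d]}|_{B'}$ and meeting the thin stratum transversally, apply the lemma inductively to $B'$ (using Lemma \ref{lem:GM} to control how $F$ behaves under restriction), and derive a contradiction from the fact that the thin stratum now has codimension in $B'$ no larger than its cogenus, placing it within the reach of the inductive hypothesis.
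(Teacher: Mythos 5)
You correctly isolate the real difficulty: the IC support axiom gives codimension $>i$, while the lemma demands cogenus $>i$, and these differ precisely because equisingular strata can be ``thin'' (small cogenus, large codimension). But the mechanism you propose for ruling out thin contributions does not close. First, a transverse slice $B'$ through a generic point of a thin stratum $S$ \emph{preserves} the codimension of $S$, so on $B'$ the IC support axiom still permits $\hH^i$ to be supported on $S\cap B'$; to gain anything you would need $B'$ to (locally) \emph{contain} $S$ and have dimension only $\dim S+\delta$, which is the opposite of what you wrote. Second, and more seriously, the induction requires knowing that $\mathrm{IC}(B,L)|_{B'}$ is again (a shift of) $\mathrm{IC}(B',L|_{B'})$; that is exactly Lemma \ref{lem:stillic}, whose proof in the paper \emph{uses} Lemma \ref{lem:estimate}, so invoking it here is circular, and Lemma \ref{lem:GM} --- which is a pointwise bound on stalk cohomology of perverse summands --- says nothing about how $F$ restricts to subvarieties. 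Third, constructibility of the IC sheaf with respect to the equisingular stratification of an arbitrary (non-versal) base is asserted but not justified. (Your ``upgrade'' of Corollary \ref{cor:codim} to exact codimension is fine locally at a point of cogenus $\delta$, but it is not where the difficulty lies.)

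The paper's proof avoids all of this by working pointwise and extracting more from Corollary \ref{cor:splitting} than the splitting of the constant summand $\mathrm{H}^1(\overline{\cC_b})$: after replacing the family near $b$ by one of \emph{rational} curves with the same singularities, the nearby smooth fibres have genus equal to the cogenus $\delta$ of $\cC_b$. Macdonald's formula then shows every summand of $\mathrm{R}^j\widetilde{\pi}'^{[d]}_*\CC$ already occurs for $d\le\delta$ (and Proposition \ref{prop:down} keeps $\cC'^{[\min(d,\delta)]}$ smooth), while relative hard Lefschetz reduces to $j\le d$. Then $j\le d\le\delta\le i$, and Lemma \ref{lem:GM} --- applied as a stalk estimate, which is its actual content --- gives $\hH^i=0$ directly. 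This is the step your argument is missing: the reduction to $d\le\delta$, which converts the Goresky--MacPherson inequality into exactly the cogenus bound, with no stratification or induction needed.
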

\begin{proof}
  We check at some point $b \in B$ and write $\delta$ for the cogenus
  of $\cC_b$.  By semicontinuity
  of cogenus, in some neighborhood all curves have cogenus $\le \delta$; we 
  shrink $B$ to this neighborhood and show that 
  $\hH^{i}(\mathrm{IC}(B,\mathrm{R}^j\widetilde{\pi}^{[d]}_* 
  \CC)[-\dim B]) = 0$ for all $i \ge \delta$.  
  Shrinking $B$ further if necessary, let $\pi':\cC'\to B$ be the family
  of curves constructed in Corollary \ref{cor:splitting}, which we
  recall has the property that $\cC'_b$ is rational, 
  $\mathrm{R}^1 \widetilde{\pi}_* \CC = 
{\mathrm  R^1} \widetilde{\pi}'_* \CC \oplus \mathrm{H}^1(\overline{\cC_b})$,
  and by  item (1) of Theorem \ref{thm:smoothness}, $\cC'^{[d]}$ is smooth.
  Taking exterior powers 
  and comparing with Equation (\ref{eq:macdonald}), we see 
  that $\mathrm{R}^j\widetilde{\pi}^{[d]}_* 
  \CC$ is a sum of $\mathrm{R}^{\le j}\widetilde{\pi}'^{[d]}_* 
  \CC$; it will therefore suffice to check the assertion for the 
  family $\cC'$. 

  Note $\delta$ is the common arithmetic genus of the fibres of $\pi'$.  
  From Macdonald's formula (\ref{eq:macdonald}), all summands of
  $\mathrm{R}^i \widetilde{\pi}'^{[d]}_* \CC$
  appear already as summands of 
  $\mathrm{R}^i \widetilde{\pi}'^{[\min(d,\delta)]}_* \CC$.  As
  $\cC'^{[\min(d,\delta)]}$ is smooth by Proposition \ref{prop:down},
  we may as well assume $d \le \delta$.   By relative hard
  Lefschetz, it suffices to check the assertion for $j \le d$.  
  But now $j \le d \le \delta \le i$, thus by the previous lemma, 
  we are done.
\end{proof}

\noindent {\bf Remark}. 
Being an IC sheaf ensures
that the above mentioned cohomology is supported on {\em some} subspace of
codimension $i+1$.  The force of the lemma is to show this subspace lies inside
the codimension $i+1$ locus of curves of cogenus $i+1$. 
Experimental evidence suggests that the support
is {\em much} smaller, and it would be  interesting to have a 
precise characterization. 

\begin{lemma} \label{lem:stillic}
  Let $\pi: \cC \to B$ be a family of curves, $B' \subset B$ a smooth closed
  subvariety, and $\pi': \cC' \to B'$ the restricted family.  Assume
  $\cC^{[d]}$ and $\cC'^{[d]}$ are smooth.
  Denote
  by $\widetilde{\pi}$ and $\widetilde{\pi}'$ the respective smooth loci
  of the maps. 
  Then   $\mathrm{IC}(B, \mathrm{R}^i \widetilde{\pi}^{[d]}_* \CC)
  |_{B'}[\dim B' - \dim B] = 
  \mathrm{IC}(B', \mathrm{R}^i \widetilde{\pi}'^{[d]}_* \CC )$.  
\end{lemma}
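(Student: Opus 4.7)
Write $j\colon B' \hookrightarrow B$ for the inclusion with codimension $c := \dim B - \dim B'$, set $\fF := \mathrm{IC}(B, \mathrm{R}^i \widetilde{\pi}^{[d]}_* \CC)$, and let $L'$ denote the local system $\mathrm{R}^i \widetilde{\pi}'^{[d]}_* \CC$ on $\widetilde{B}' := B' \cap \widetilde{B}$. My plan is to show $j^*\fF[-c] \cong \mathrm{IC}(B', L')$ by verifying that $j^*\fF[-c]$ satisfies the Goresky--MacPherson support-cosupport characterization of the intermediate extension of $L'[\dim B']$ from $\widetilde{B}'$ to $B'$. On $\widetilde{B}'$ the two complexes agree tautologically, since $\fF|_{\widetilde{B}} = L[\dim B]$ and pulling back and shifting by $-c$ produces $L'[\dim B']$.

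For the support condition, I would show $\dim \mathrm{supp}(\hH^m(j^*\fF[-c])) < -m$ for every $m > -\dim B'$. Writing $\hH^m(j^*\fF[-c]) = j^*\hH^{m-c}(\fF)$, Lemma \ref{lem:estimate} --- whose hypothesis is precisely smoothness of $\cC^{[d]}$ --- confines the support of $\hH^{m-c}(\fF)$ to the closed locus of curves of cogenus strictly greater than $m + \dim B'$. Because $\cC'^{[d]}$ is also smooth by hypothesis, Corollary \ref{cor:codim} applies directly to the restricted family $\pi'$ and guarantees that this locus has codimension at least $m + \dim B' + 1$ inside $B'$, giving the sought strict dimension bound $< -m$. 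The dual cosupport condition then follows by Verdier duality, using that both $\fF$ and $\mathrm{IC}(B', L')$ are self-dual (up to Tate twist) as IC sheaves of local systems on smooth bases.

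The main obstacle is the arithmetic bookkeeping in the degree range where the codimension bound from Corollary \ref{cor:codim} saturates at $d$, namely when $m + \dim B' \ge d$, and therefore fails to give $< -m$ on its own. In that range I would invoke the vanishing of $\hH^n(\fF)$ for $n \ge 0$ (forced by the intermediate-extension condition defining $\fF$ as an IC sheaf), together with the fact that the cogenus is bounded by the arithmetic genus so that the support estimate of Lemma \ref{lem:estimate} is vacuous for sufficiently large cogenus. The decisive point of the argument is that Lemma \ref{lem:estimate} (controlled by smoothness of $\cC^{[d]}$) and Corollary \ref{cor:codim} (controlled by smoothness of $\cC'^{[d]}$) cooperate: the former identifies where the support of $\hH^{m-c}(\fF)$ can live, and the latter bounds how large that locus is after intersecting with $B'$.
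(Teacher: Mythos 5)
Your strategy---verify the Goresky--MacPherson support and cosupport conditions directly for $j^*\fF[-c]$---differs from the paper's, and the two places where it breaks down are exactly the places where the paper's proof does something you have omitted. The first and most serious gap is the cosupport condition. You claim it ``follows by Verdier duality, using that both $\fF$ and $\mathrm{IC}(B',L')$ are self-dual.'' But $j^*\fF[-c]$ is \emph{not} self-dual: for a closed immersion $j$, Verdier duality exchanges $j^*$ and $j^!$, so self-duality of $\fF$ on $B$ only identifies the dual of $j^*\fF[-c]$ with $j^!\fF$ (up to shift and twist). This converts the cosupport condition for $j^*\fF[-c]$ into a support condition for $j^!\fF$, about which you have proved nothing; had the cosupport condition been automatic, the lemma would be a formality, which it is not. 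The paper circumvents this entirely: it reduces by induction to the case where $B'$ is a Cartier divisor, invokes \cite{BBD}, Cor.\ 4.1.12 to see that $K:=\fF|_{B'}[-1]$ is perverse, and then uses proper base change to realize $K$ as a direct summand of $\mathrm{R}\pi'^{[d]}_*\CC[d+\dim B']$, which by the decomposition theorem (applicable because $\cC'^{[d]}$ is smooth) is a direct sum of IC complexes. A perverse summand of a semisimple perverse sheaf is itself a sum of IC complexes, so one only has to exclude summands supported in positive codimension, and for that the \emph{support} estimate alone suffices; the cosupport condition never has to be checked.

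The second gap is the one you flag yourself, and your proposed repairs do not close it. The vanishing $\hH^n(\fF)=0$ for $n\ge 0$ only covers $m\ge c$, and emptiness of the high-cogenus locus only covers $m+\dim B'\ge g$; in the range $d-\dim B'\le m<\min(c,\,g-\dim B')$ (nonempty whenever $d<\min(\dim B, g)$) neither applies, and Corollary \ref{cor:codim} genuinely saturates at codimension $d$ there, giving $\dim\mathrm{Supp}\le \dim B'-d\ge -m$ rather than the needed strict inequality. The missing ingredient is again the structural fact you never record: by proper base change $j^*\fF[-c]$ is a summand of $\mathrm{R}\pi'^{[d]}_*\CC[d+\dim B']$, so its degree-$m$ stalk cohomology at $y\in B'$ is a summand of $\mathrm{H}^{m+d+\dim B'}(\cC'^{[d]}_y)$, which vanishes for $m>d-\dim B'$ since the fibres are $d$-dimensional (with the boundary degree $m=d-\dim B'$ handled as in Lemma \ref{lem:GM}, using irreducibility of the fibres). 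This is what the paper's phrase ``and the fact that the fibre is $d$-dimensional'' is doing. In short: the parts of your argument that do work (Lemma \ref{lem:estimate} for the family over $B$ combined with Corollary \ref{cor:codim} for the family over $B'$, in the low-degree range) reproduce the paper's support estimate, but the proof cannot be completed without importing the summand structure of $j^*\fF[-c]$ inside the pushforward of the restricted family.
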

\begin{proof}
  By induction on the codimension of $B'$ in $B$, we are 
  reduced to proving the statement 
  for $B'$ a Cartier divisor in $B$. By \cite{BBD}, Cor. 4.1.12,  
  the complex 
  $K:= \mathrm{IC}(B, \mathrm{R}^i \widetilde{\pi}^{[d]}_* \CC)  |_{B'}[-1]$ 
  is a perverse sheaf.  By proper base change, $K$ is a summand of
  $ \mathrm{R} \pi'^{[d]}_* \CC[d + \dim B']$.  As  $\cC'^{[d]}$ is smooth,
  $K$ must be the sum of IC complexes, and by Corollary \ref{cor:codim}
  the locus of curves of cogenus $\delta \le d$ appears in codimension at least
  $\delta$ in $B'$.  By Lemma \ref{lem:estimate} and the fact that
  the fibre is $d$-dimensional, $\dim\, \mathrm{Supp}\, \hH^i(K) < -i$ for
  $i \ne - \dim B'$.  Therefore no summand of $K$ is an IC complex associated
  to a local system supported in positive codimension in $B'$,
  and the claimed isomorphism follows from the obvious fact 
  that, on the smooth locus, $K$ coincides with the (shifted) local system
  $\mathrm{R}^i \widetilde{\pi}'^{[d]}_* \CC[\dim B']$. 
\end{proof}

\begin{corollary} \label{cor:subfamilies}
  Let $\pi:\cC \to B$ be a family of curves, and 
  $\pi':\cC' \to B'$ its restriction to a smooth subvariety of the base;
  assume $\cC^{[d]}$ and $\cC'^{[d]}$ are smooth.  Let $\fF$ be 
  the summand of 
  $\mathrm{R} \pi_*^{[d]} \CC[d+\dim B]$ not supported on all of $B$, i.e. 
  $ \mathrm{R} \pi_*^{[d]} \CC[d+\dim B]=   \left(    \bigoplus_{i=-d}^{d}
  \mathrm{IC}(B, \mathrm{R}^{d+i} \widetilde{\pi}^{[d]}_* \CC)[-i] \right) \bigoplus \fF $, 
    and similarly
  $\fF'$ for $B'$.  If $B' \not\subset \mathrm{Supp}\, \fF$, 
  then $\fF' = \fF|_{B'} [\dim B' - \dim B]$. 
\end{corollary}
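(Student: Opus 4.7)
The plan is to apply the decomposition theorem on both $B$ and $B'$, match the full-support summands via proper base change and Lemma \ref{lem:stillic}, and then cancel them. Set $c = \dim B - \dim B'$. On $B$ the decomposition theorem gives
\[
\mathrm{R} \pi^{[d]}_* \CC[d + \dim B] \;=\; M \oplus \fF,
\]
where $M := \bigoplus_i \mathrm{IC}(B, \mathrm{R}^{d+i} \widetilde{\pi}^{[d]}_* \CC)[-i]$ is the full-support summand (identified as such in the introduction) and $\fF$ is the sum of the remaining shifted IC summands, all supported on proper subvarieties of $B$. Proper base change, combined with a shift by $-c$, produces
\[
\mathrm{R} \pi'^{[d]}_* \CC[d + \dim B'] \;\cong\; M|_{B'}[-c] \oplus \fF|_{B'}[-c],
\]
and Lemma \ref{lem:stillic} identifies $M|_{B'}[-c]$ term by term with $M' := \bigoplus_i \mathrm{IC}(B', \mathrm{R}^{d+i} \widetilde{\pi}'^{[d]}_* \CC)[-i]$.

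Next I would apply the decomposition theorem directly to $\pi'$ to write
\[
\mathrm{R} \pi'^{[d]}_* \CC[d + \dim B'] \;=\; M' \oplus \fF',
\]
with $\fF'$ the summand supported on proper subvarieties of $B'$. Since $\fF|_{B'}[-c]$ is a direct summand of the semisimple complex on the left, it is itself a finite direct sum of shifted IC sheaves. The hypothesis $B' \not\subset \mathrm{Supp}\,\fF$ says $\mathrm{Supp}(\fF|_{B'}) = \mathrm{Supp}(\fF) \cap B'$ is a proper subvariety of $B'$, so none of the IC summands appearing in $\fF|_{B'}[-c]$ has support equal to $B'$. I would then invoke Krull--Schmidt in the semisimple category of shifted IC sheaves to cancel the common summand $M'$ from the two displayed decompositions of $\mathrm{R} \pi'^{[d]}_* \CC[d + \dim B']$, obtaining $\fF' \cong \fF|_{B'}[-c]$, which is the assertion.

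I do not see a substantial obstacle: the real content has been isolated in Lemma \ref{lem:stillic}, and what remains is bookkeeping. The two points I would track carefully are the shift by $-c$ being exactly what reconciles the perverse normalization on $B$ with that on $B'$ under restriction to a smooth closed subvariety of codimension $c$, and the use of the hypothesis $B' \not\subset \mathrm{Supp}\,\fF$: without it, a summand of $\fF|_{B'}[-c]$ could in principle have full support on $B'$ and merge with $M'$, so that cancellation would yield the wrong complement.
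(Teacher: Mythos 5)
Your argument is correct and is essentially the paper's own (implicit) proof: the corollary is stated without separate proof precisely because it follows from Lemma \ref{lem:stillic} by proper base change, the decomposition theorem on both bases, and cancellation of the full-support part, exactly as you lay out. The two points you flag --- the shift by $\dim B' - \dim B$ reconciling the perverse normalizations, and the role of the hypothesis $B' \not\subset \mathrm{Supp}\,\fF$ in preventing a summand of the restriction from acquiring full support on $B'$ --- are indeed the only things that need care, and you handle both correctly.
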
 

\section{Proof via reduction to rational curves}

\begin{proposition} \label{prop:smalld}
  Let $\pi:\cC \to B$ be a family of curves.
  Then, in some neighborhood of $b \in B$, 
  Theorem \ref{thm:support} holds for $d \le \delta(\cC_b)$. 
\end{proposition}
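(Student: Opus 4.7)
The plan is to argue by induction on $\delta$. The base case $\delta=0$ is immediate: every fibre of $\pi$ is smooth, so $\pi^{[d]}$ is a smooth proper map and the decomposition theorem presents $\mathrm{R}\pi^{[d]}_*\CC$ as a direct sum of shifted local systems on $B$, each equal to its own IC extension.

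For the inductive step, assume the proposition for all families of cogenus bounded by $\delta'<\delta$. Let $B_\delta\subset B$ be the cogenus-$\delta$ locus; by Corollary \ref{cor:codim}, $\operatorname{codim}_B B_\delta\ge\delta$. Suppose for contradiction that $\fF$ is a summand of $\mathrm{R}\pi^{[d]}_*\CC[d+\dim B]$ whose support $Z$ is a proper closed subvariety of $B$. By Lemma \ref{lem:GM}, $\operatorname{codim}_B Z<d\le\delta$. It therefore suffices to show $Z\subseteq B_\delta$, because these two inequalities then collide and force $\fF=0$.

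To establish $Z\subseteq B_\delta$, I would localize at an arbitrary point $b\in B$ with cogenus $\delta_b<\delta$ and show $b\notin\mathrm{Supp}\,\fF$. Corollary \ref{cor:splitting} provides a rational replacement $\pi':\cC'\to U$ on a neighborhood $U\ni b$, with $\cC'_b$ rational of arithmetic genus $\delta_b$ and $\mathrm{R}^1\widetilde\pi_*\CC=\mathrm{R}^1\widetilde\pi'_*\CC\oplus\mathrm{H}^1(\overline{\cC_b})$, the second summand being a constant local system. Item (1) of Theorem \ref{thm:smoothness} ensures the relative Hilbert schemes of $\pi'$ are smooth, so the inductive hypothesis applies to $\pi'$ at every degree $d'\le\delta_b$. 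Feeding the splitting of $\mathrm{R}^1\widetilde\pi_*\CC$ into Macdonald's formula \ref{eq:macdonald} and expanding exterior powers, each $\mathrm{R}^j\widetilde\pi^{[d]}_*\CC$ becomes a direct sum of tensor products $\mathrm{R}^{j'}\widetilde\pi'^{[d']}_*\CC\otimes V$ for $j'\le j$, $d'\le d$, and constant vector spaces $V$. Since formation of $\mathrm{IC}$ commutes with tensoring by constant local systems, this identifies the IC-full-support summands of $\mathrm{R}\pi^{[d]}_*\CC$ near $b$ with constant-twisted IC-full-support summands of $\mathrm{R}\pi'^{[d']}_*\CC$ for various $d'\le d$.

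The main obstacle is twofold: first, controlling the Macdonald-mediated reduction in the range $\delta_b<d'\le d$, which the inductive hypothesis does not cover directly; and second, passing from the above identification of IC-full-support content to ruling out that $\fF$ itself is supported at $b$. For the first, I would exploit that $\mathrm{R}^1\widetilde\pi'_*\CC$ has rank $2\delta_b$ and carries a monodromy-equivariant symplectic form, so $\bigwedge^a\mathrm{R}^1\widetilde\pi'_*\CC$ for $\delta_b<a\le 2\delta_b$ is a Tate twist of $\bigwedge^{2\delta_b-a}\mathrm{R}^1\widetilde\pi'_*\CC$, pushing every relevant wedge into the range handled by the inductive hypothesis, with Proposition \ref{prop:down} ensuring the needed smoothness of intermediate Hilbert schemes. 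For the second, one uses the Macdonald-plus-splitting identification to compare the stalk of $\mathrm{R}\pi^{[d]}_*\CC$ at $b$ with what the IC-full-support pieces already account for; any residual summand at $b$ would have to lift through this identification and would ultimately violate the inductive statement for $\pi'$. Carrying out this last comparison cleanly is the delicate heart of the argument, and is where the reduction-to-rational-curves method is deployed most heavily.
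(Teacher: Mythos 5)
Your numerical setup is the right one --- the tension between $\operatorname{codim}\,\mathrm{Supp}\,\fF < d$ from Lemma \ref{lem:GM} and the codimension bound on deep cogenus strata from Corollary \ref{cor:codim} is exactly what drives the paper's argument --- but the step on which your whole plan rests, showing $Z\subseteq B_\delta$ by excluding $b\in\mathrm{Supp}\,\fF$ for every $b$ of cogenus $\delta_b<\delta$, is not carried out, and the mechanism you sketch for it cannot work as stated. Corollary \ref{cor:splitting} and Macdonald's formula identify only the local systems $\mathrm{R}^j\widetilde{\pi}^{[d]}_*\CC$ on the \emph{smooth} locus with (constant twists of) those of the rational replacement $\pi'$; this controls the full-support IC summands but says nothing about summands supported inside the discriminant, which is precisely what $\fF$ is. There is no map between $\cC^{[d]}$ and $\cC'^{[d']}$, so a summand of $\mathrm{R}\pi^{[d]}_*\CC$ concentrated near $b$ has no reason to ``lift through'' the identification and contradict the inductive statement for $\pi'$. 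The only comparison the paper can make between the two families over the discriminant is numerical (weight polynomials of stalks, via Propositions \ref{prop:weights} and \ref{prop:bps}), and that comparison takes Proposition \ref{prop:smalld} as its \emph{input}: the reduction-to-rational-curves machinery is what extends the theorem from $d\le\delta$ to all $d$, so deploying it to prove the $d\le\delta$ case itself is close to circular. You also flag, correctly, a second gap: your induction on $\delta$ gives nothing at a point with $\delta_b<d\le\delta$, since the restricted family near such a point no longer satisfies the hypothesis relating $d$ to the cogenus bound.

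The paper's actual proof is short and avoids the rational replacement entirely. Take a counterexample over a base of minimal dimension, pick $b\in\mathrm{Supp}\,\fF$, and restrict to a generic slice of dimension $\delta(b)$ through $b$: the restricted relative Hilbert scheme is smooth by Theorem \ref{thm:smoothness}, and the counterexample persists by Corollary \ref{cor:subfamilies}, so by minimality one may assume $\dim B=\delta(b)=\delta$. Lemma \ref{lem:GM} then forces $\mathrm{Supp}\,\fF$ to have codimension $<d\le\dim B$, hence positive dimension, hence to meet a generic slice of dimension $\delta-1$, producing a smaller counterexample --- a contradiction. If you want to salvage your write-up, keep the opening dimension count but replace the entire ``localize at $b$ and compare with $\pi'$'' portion with this slicing argument; Corollary \ref{cor:subfamilies} is the tool you were missing.
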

\begin{proof}
  Suppose not; let $\cC \to B$ be a counterexample over a base of minimal
  dimension.  Let $b \in B$ be any point in the support of a summand 
  $\fF$ of 
  $\mathrm{R} \pi_*^{[d]} \CC$ not supported on all of $B$. 
  By Theorem \ref{thm:smoothness} and Corollary \ref{cor:subfamilies},
  the restriction of the family to a general slice of dimension
  $d$ passing through $b$ remains a counterexample.  Therefore
  $d \ge \dim B$.  On the other hand, by (2) of
  Theorem \ref{thm:smoothness}, and the assumption that $d \le \delta(\cC_b)$;
  we must have $d \le \dim B$. 
  By Lemma \ref{lem:GM}, the support of $\fF$ is of codimension
  $< d$, thus it intersects a general $d - 1$ dimensional
  slice of $B$.  Again by Corollary \ref{cor:subfamilies}, the restricted
  family remains a counterexample, contradicting the assumption of minimal
  dimensionality.
\end{proof}

Now let $\pi: \cC \to B$ be a family of curves; 
shrinking to a neighborhood of
some $b \in B$,  let $\pi':\cC' \to B$ be the replacement family of 
Corollary \ref{cor:splitting}.  Then from Equation (\ref{eq:macdonaldseries}), 
we see
\begin{equation}
  \sum_{d=0}^\infty \sum_{i=0}^{2d} q^d
  \mathrm{R}^i \widetilde{\pi}^{[d]}_* \CC
  = 
   \left( 
     \sum_{d=0}^\infty \sum_{i=0}^{2d} q^d \mathrm{H}^i(\overline{\cC_b}^{[d]})
   \right) \otimes 
   \left(\sum_{i=0}^{2\delta(\cC_b)} q^i \mbox{$\bigwedge$}^i \mathrm{R}^1
   \widetilde{\pi}'_* \CC \right)
\end{equation}
As the final term is manifestly symmetric about $q^\delta$, 
the  series is determined by its first $\delta$ terms.  

To finish the proof of Theorem \ref{thm:support}, 
it would suffice to show that 
\begin{equation}\label{eq:notproduct}
\sum q^d \mathrm{H}^*(\cC_b^{[d]}) =
\left(\sum q^d \mathrm{H}^*(\overline{\cC_b}^{[d]}) \right) \mathcal{Z}_C(q)
\end{equation}
for a generating polynomial of vector spaces $\mathcal{Z}_C(q)$ of 
degree $2\delta$ with coefficients symmetric around $q^\delta$.  
Indeed, then the fibre at $b$ of both sides of the equality asserted  in
Theorem \ref{thm:support} would be determined in the same way by their values
for $C^{[\le \delta]}$, which by 
Proposition \ref{prop:smalld} are equal. 

However, we know no direct way to establish Equation \ref{eq:notproduct}, 
although of course it will follow as a 
consequence of Theorem \ref{thm:support}.   Instead, we prove
the product formula and check the symmetry 
in the Grothendieck group of varieties, in which we denote by 
$\LL$ the class of the affine line.  This is still sufficient,
because the {\em weight polynomial} both factors through the 
Grothendieck group of varieties and serves to witness 
the non-existence of summands of $\mathrm{R}\pi_*^{[d]} \CC[\dim B]$.
For $K$ a complex of vector spaces carrying a weight filtration, we
write the weight polynomial 
$\mathfrak{w}(K) := \sum_{i,j} t^i (-1)^{i+j} \dim \mathrm{Gr}_W^i 
\mathrm{H}^j(K)$.  For a variety $Z$, we abbreviate 
$\mathfrak{w}(Z)$ for $\mathfrak{w}(\mathrm{H}^*_c(Z))$.

\begin{proposition} \label{prop:weights} Suppose given 
a proper map $f:X \to Y$ between smooth varieties, and some summand
$\fF$ of $\mathrm{R} f_* \CC[\dim X]$.  If, for all 
$y \in Y$, we have  $\mathfrak{w}(\fF_y[- \dim X]) = \mathfrak{w}(X_y)$, then
$\fF = \mathrm{R} \pi_* \CC[\dim X]$.
\end{proposition}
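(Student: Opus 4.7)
The plan is to invoke the decomposition theorem, split off the given summand, and show the complement must vanish. Write $\mathrm{R} f_* \CC[\dim X] = \fF \oplus \gG$; it suffices to prove $\gG = 0$. Proper base change gives $(\mathrm{R} f_* \CC)_y \simeq \mathrm{H}^*(X_y, \CC)$, and since $X_y$ is proper, $\mathrm{H}^*(X_y, \CC) = \mathrm{H}^*_c(X_y, \CC)$, so $\mathfrak{w}((\mathrm{R} f_* \CC[\dim X])_y[-\dim X]) = \mathfrak{w}(X_y)$. Additivity of weight polynomials under direct sums, combined with the hypothesis on $\fF$, then yields $\mathfrak{w}(\gG_y[-\dim X]) = 0$ for every $y \in Y$. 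The task thus reduces to the purely sheaf-theoretic assertion that a summand of $\mathrm{R} f_* \CC[\dim X]$ whose stalks have identically vanishing weight polynomial must be zero.

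The crucial ingredient for the reduction is purity. Since $X$ is smooth, $\CC_X[\dim X]$ underlies a pure Hodge module of weight $\dim X$, and by Saito's direct image theorem for proper $f$ (equivalently, the refined decomposition theorem of \cite{BBD}), $\mathrm{R} f_* \CC[\dim X]$ is pure of weight $\dim X$; therefore so is the summand $\gG$. Decompose $\gG = \bigoplus_\alpha \mathrm{IC}(Z_\alpha, \mathcal{L}_\alpha)[\ell_\alpha]$, with each $\mathcal{L}_\alpha$ an irreducible polarisable variation of Hodge structure on a dense open subset of $Z_\alpha$; purity forces $\mathcal{L}_\alpha$ to be pure of weight $\dim X - \dim Z_\alpha - \ell_\alpha$.

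Now suppose for contradiction that $\gG \neq 0$, and choose $Z$ maximal among the $Z_\alpha$ under inclusion. Pick a point $z$ in the smooth locus of $Z$ avoiding every $Z_\alpha \neq Z$; this is possible because each such $Z_\alpha$ meets $Z$ in a proper closed subset. Only the summands with $Z_\alpha = Z$ then contribute to $\gG_z$, each as $(\mathcal{L}_\alpha)_z$ placed in cohomological degree $-\dim Z - \ell_\alpha$ and pure of weight $\dim X - \dim Z - \ell_\alpha$. Hence
\[
\mathfrak{w}(\gG_z) \;=\; \sum_{\ell}\, (-1)^{\dim Z + \ell} \Big(\, \sum_{\substack{Z_\alpha = Z \\ \ell_\alpha = \ell}} \dim \mathcal{L}_\alpha \Big)\, t^{\,\dim X - \dim Z - \ell}.
\]
Distinct values of $\ell$ produce distinct monomials in $t$, so the vanishing of $\mathfrak{w}(\gG_z)$ forces each inner sum of positive integers to be empty, contradicting the existence of any $\alpha$ with $Z_\alpha = Z$.

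The main obstacle, and the substantive input, is securing purity: one must invoke Saito's theory of pure Hodge modules (or the weight formalism of \cite{BBD}) to guarantee that summands of $\mathrm{R} f_* \CC[\dim X]$ inherit purity, so that the coefficients $\mathcal{L}_\alpha$ are genuinely pure variations whose generic-stalk weights are determined by the shift and the dimension of the support. Once this is in hand the rest is formal: it amounts to the observation that a pure shifted IC sheaf contributes a non-trivial monomial of definite sign and weight to the weight polynomial at a generic point of its support, so no cancellation across distinct supports or distinct cohomological shifts is possible.
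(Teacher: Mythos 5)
Your argument is correct and follows essentially the same route as the paper: reduce to showing that a summand $\gG$ with $\mathfrak{w}(\gG_y)=0$ for all $y$ must vanish, decompose $\gG$ into shifted IC sheaves of pure variations of Hodge structure, and evaluate at a generic point of a suitably chosen support where purity rules out cancellation among the monomials of the weight polynomial. The only difference is cosmetic: you select a summand of maximal support and a generic point avoiding the other supports, while the paper selects the summand whose local system has highest weight; your bookkeeping of the weights is a more detailed write-up of the same idea.
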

\begin{proof}
 Let $\mathrm{R} f_* \CC[\dim X] = \fF \bigoplus \gG$; we must show
 that if $\mathfrak{w}(\gG_y) = 0$ for all $y \in Y$, then $\gG = 0$. 
 $\gG$ is a direct sum of shifted complexes 
 of the form  $\mathrm{IC}(L_i)$, with $L_i$ local systems
 supported on locally closed subsets of $B$ 
 underlying pure variations of Hodge structures. Then for $y$ a general
 point of the support, the purity of $\gG$ and the vanishing of 
 the weight polynomial force the vanishing of the local systems.
\end{proof}

Let $C$ be a curve, $C^{sm}$ its smooth locus, 
and $\overline{C}$ its normalization.  For
$p \in C$, we write $(C,p)^{[n]}$ for the subvariety of $C^{[n]}$ 
parameterizing subschemes set-theoretically supported at $p$; our notation
is meant to recall that it depends only on the germ of $C$ at $p$. 
Let $b(p)$ be the number of analytic local branches of $C$ near $p$. 
Splitting subschemes according to their support gives
the following equality in the Grothendieck group of varieties:
\begin{eqnarray}
\sum q^n [C^{[n]}] & = &
\sum q^n [(C^{sm})^{[n]}] \prod_{p \in C \setminus C^{sm}} \sum q^n [(C,p)^{[n]}] \\
\label{eq:normalized}
& = & \left(\sum q^n [\overline{C}^{[n]}] \right)
\left(
\prod_{p \in C \setminus C^{sm}} (1-q)^{b(p)} \sum q^n [(C,p)^{[n]}] \right)
\end{eqnarray}

This is the desired product formula. It remains to show that the final
term of Equation \ref{eq:normalized} is symmetric around $q^\delta$.  
After passing to Euler characteristics, this is shown in \cite{PT} 
using Serre duality; the argument below is similar.

\begin{proposition}  
\label{prop:bps}
Let $C$ be a Gorenstein curve of cogenus $\delta$,
with smooth locus $C^{sm}$ and $b(p)$ analytic local branches at a point
$p \in C$.  Define 
\[
Z_C(q) := \prod_{p \in C \setminus C^{sm}} (1-q)^{b(p)} \sum q^n [(C,p)^{[n]}] \]
Then $Z_C(q)$ is a polynomial in $q$ of degree $2 \delta$.  Moreover, 
writing $\LL$ for the class of the affine line, we have 
$Z_C(q) = (q^2 \LL )^\delta Z_C(1/q\LL)$. 
\end{proposition}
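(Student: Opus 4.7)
Plan: I will prove the intermediate statement that for any Gorenstein curve $C$ of arithmetic genus $g$,
\[
(1-q)(1-q\LL) \cdot H_C(q) = Q_C(q),
\]
where $H_C(q) := \sum_{n \geq 0} q^n [C^{[n]}]$ and $Q_C$ is a polynomial in $q$ of degree $2g$ satisfying the functional equation $Q_C(q) = (q^2\LL)^g Q_C(1/q\LL)$. Applying this both to $C$ and to its normalization $\overline{C}$ (smooth, of genus $\bar g = g - \delta$), and using the product formula $H_C(q) = H_{\overline{C}}(q) \cdot Z_C(q)$ from Equation \ref{eq:normalized}, gives $Z_C(q) = Q_C(q)/Q_{\overline{C}}(q)$ as a rational function of degree $2\delta$ whose functional equation follows by taking the ratio.

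The key input is a motivic form of Serre duality on $C$:
\[
[C^{[n]}] - \LL^{n+1-g} [C^{[2g-2-n]}] = [\Jbar] \cdot [\PP^{n-g}] \qquad (n \geq g-1).
\]
I would derive this by stratifying the Abel-Jacobi map $AJ: C^{[n]} \to \Jbar^n(C)$ according to $h^0(F)$: the fibre over $F$ is $\PP(\mathrm{H}^0(F))$, and the Serre duality involution $F \mapsto F^D := \underline{\mathrm{Hom}}(F, \omega_C)$ identifies the stratum $\{h^0 = h\} \subset \Jbar^n$ with $\{h^0 = h-n-1+g\} \subset \Jbar^{2g-2-n}$. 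Combined with the Grothendieck-ring identity $[\PP^a] - \LL^{a-b}[\PP^b] = [\PP^{a-b-1}]$ (for $a \geq b \geq 0$) and the $n$-independence of $[\Jbar^n]$ (valid since $C$ is integral over $\CC$ and so carries line bundles of any degree), this extracts the claim. Multiplying by $q^n$, summing, and using $\sum_k q^k [\PP^k] = 1/((1-q)(1-q\LL))$, the coefficients $c_m = [C^{[m]}] - (1+\LL)[C^{[m-1]}] + \LL[C^{[m-2]}]$ of $Q_C$ satisfy $c_m = 0$ for $m > 2g$ (directly, since $[C^{[n]}] = [\Jbar][\PP^{n-g}]$ for $n \geq 2g-1$), and $c_{2g-m} = \LL^{g-m} c_m$ (applying the Serre duality identity to each of the three terms). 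In particular $c_0 = 1$ and $c_{2g} = \LL^g$, so $Q_C$ has degree exactly $2g$.

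To conclude polynomiality of $Z_C$, I use its factorization $Z_C(q) = \prod_p Z_{(C,p)}(q)$ over singular points. Each local factor depends only on the analytic germ $(C,p)$, so may be computed by globalizing to a rational curve $C'_p$ whose only singularity is analytically isomorphic to $(C,p)$ (such curves exist, as cited earlier in the paper). Then $\overline{C'_p} = \PP^1$ gives $Q_{\overline{C'_p}}(q) = 1$, and the argument above applied to $C'_p$ shows $Z_{(C,p)}(q) = Q_{C'_p}(q)$ is a polynomial of degree $2\delta(p)$ with the functional equation. Taking the product over singularities yields a polynomial of degree $\sum 2\delta(p) = 2\delta$ with the required symmetry.

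The main obstacle will be the careful motivic bookkeeping in establishing the Serre duality identity in $K_0(\mathrm{Var})$: handling the boundary case $n = g - 1$ correctly, verifying the $n$-independence of $[\Jbar^n]$, and matching the coefficients at the endpoints $m \in \{0, 2g\}$. The globalization step to rational curves for polynomiality is essentially automatic given the existence result already cited.
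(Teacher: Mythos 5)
Your proposal is correct and rests on the same geometric inputs as the paper's own proof: the reduction to rational curves via the product formula (\ref{eq:normalized}), the stratification of the Abel--Jacobi map $C^{[n]}\to\Jbar^n(C)$ into projective bundles over the loci where $h^0$ is constant, and the Serre-duality involution $F\mapsto \hH om(F,\omega_C)$ on rank one torsion free sheaves. The only difference is bookkeeping: the paper verifies the functional equation stratum by stratum via the jump sets $\phi_\pm$ of each Hilbert function $h_\fF$, whereas you aggregate the same information into the degree-by-degree identity $[C^{[n]}]-\LL^{n+1-g}[C^{[2g-2-n]}]=[\Jbar][\PP^{n-g}]$ and read off the symmetry of the coefficients of $(1-q)(1-q\LL)\sum q^n[C^{[n]}]$; both computations are valid, and the boundary cases you flag ($n=g-1$, $m\in\{0,2g\}$) do check out.
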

\begin{proof}
  By Equation \ref{eq:normalized}, we 
  may assume $C$ is a rational curve of arithmetic genus $g$; note in
  this case $Z(C) = (1-q)(1-q\LL) \sum q^d [C^{[d]}]$. 
  Fix a degree 1 line bundle 
  $\oO(1)$ on $C$.  We map $C^{[d]} \to \overline{J}^0(C)$
  by associating the ideal $I \subset \oO_C$ to the
  sheaf $I^* = \hH om(I,\oO_C) \otimes \oO(-d)$; the fibre is 
  $\PP(H^0(C,I^*))$.  For $\fF$ a rank one degree zero torsion
  free sheaf, we write the Hilbert function as 
  $h_\fF(d) = \dim \mathrm{H}^0(C,\fF \otimes \oO(d))$.  Then since over
  the strata with constant Hilbert function, the map from the Hilbert schemes
  to the compactified Jacobian is the projectivization of a vector bundle, 
  we have the equality
  $\sum q^d [C^{[d]}]  = \sum_h [\{\fF\,|\,h_{\fF} = h\}] \sum q^d 
  [\PP^{h(d)-1}]$.
  
  Fix $h = h_\fF$ for some $\fF$.  Evidently $h$ 
  is supported in $[0,\infty)$, and by Riemann-Roch and
  Serre duality is equal to
  $d+1-g$ in $(2g-2,\infty)$.  Inside $[0,2g-2]$, it
  either increases by 0 or 1 at each step.  Let $\phi_\pm(h)
  = \{ d\,|\, 2 h(d-1) - h(d-2) - h(d) = \pm 1\}$; evidently
  $\phi_- \subset [0,2g]$ and $\phi_+ \subset [1,2g-1]$, and
  \[Z_h(q):=(1-q) (1-q\LL) \sum q^d [\PP^{h(d)-1}] = 
  \sum_{d \in \phi_-(\fF)} q^d \LL^{h(d)-1} -
  \sum_{d \in \phi_+(\fF)} 
  q^{d} \LL^{h(d-1)} \]
  This is a polynomial in $q$ of degree at most $2g$, hence so is $Z_C(q)$. 
  
  Now let $\gG = \fF^* \otimes \omega_C \otimes \oO(2-2g)$, and $h^\vee = h_\gG$. 
  By Serre duality and Riemann-Roch, $h^\vee(d) = h(2g-2-d) + d + 1 - g$, so
  in particular, $d \in \phi_{\pm}(h^\vee) \iff 2g-d \in \phi_{\pm}(h)$.  It
  follows that $q^{2g} \LL^g  Z_h(1/q\LL) = Z_{h^{\vee}}(q)$.  As 
  $Z_C(q) = \sum_h  [\{\fF\,|\,h_{\fF} = h\}] Z_h(q)$, we obtain the 
  final stated equality. 
\end{proof}

This completes the (first) proof of Theorem \ref{thm:support}.

\section{Proof by reduction to nodal curves} \label{sec:versal}

\begin{lemma} \label{lem:reduction}
  If Theorem \ref{thm:support} holds for all versal families of curves, 
  then it holds for all families. 
\end{lemma}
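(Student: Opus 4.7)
The plan is to show that, in a neighborhood of any $b \in B$, the family $\pi:\cC \to B$ is cut out by a smooth embedding from a locally versal family, and then to apply Corollary \ref{cor:subfamilies} to transport the vanishing of the non-top summand from the versal family to $\pi$.

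Fix $b \in B$, let $\pi^V:\cC^V \to \VV$ be a versal deformation of $\cC_b$, and let $f:(B,b) \to (\VV,0)$ be a classifying map for which $\cC \cong f^*\cC^V$ (this exists by versality after possibly shrinking $B$). The key construction is to pull back $\cC^V$ along the second projection $\mathrm{pr}_\VV:B \times \VV \to \VV$, producing a family $\tilde\pi:\tilde\cC \to B \times \VV$. The graph $\Gamma_f \subset B \times \VV$ is a smooth subvariety isomorphic to $B$, and $\tilde\cC|_{\Gamma_f} \cong \cC$. After shrinking $\VV$ so that $\pi^V$ is locally versal throughout (Theorem \ref{thm:versal}), Theorem \ref{thm:smoothness} gives that $\cC^{V,[d]}$ is smooth; since $\mathrm{pr}_\VV$ is smooth, $\tilde\cC^{[d]} = \cC^{V,[d]} \times_\VV (B \times \VV)$ is smooth as well. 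Moreover $\tilde\pi$ is itself locally versal: at any point $(b',v) \in B \times \VV$ the tangent map factors through $T_v \VV \to \prod \mathrm{Def}_1(\mathrm{sing}(\cC^V_v))$, which is surjective.

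By hypothesis, Theorem \ref{thm:support} holds for the locally versal family $\tilde\pi$, so the summand $\tilde\fF$ of $\mathrm{R}\tilde\pi^{[d]}_*\CC[d+\dim(B\times\VV)]$ not supported on all of $B \times \VV$ vanishes. Corollary \ref{cor:subfamilies}, applied to the smooth subvariety $\Gamma_f \subset B \times \VV$ (whose hypothesis $\Gamma_f \not\subset \mathrm{Supp}\,\tilde\fF$ is trivially satisfied since $\tilde\fF = 0$), then identifies the corresponding summand $\fF$ for $\pi$ over a neighborhood of $b$ with a shift of $\tilde\fF|_{\Gamma_f}$, which is zero. Since $b$ was arbitrary and the presence of a summand supported in positive codimension is detected by stalks over a generic point of its support, $\fF = 0$ globally on $B$, proving the lemma. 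The main point of care is confirming that the ambient family $\tilde\cC \to B \times \VV$ has smooth relative Hilbert scheme — this is precisely what authorizes the use of Corollary \ref{cor:subfamilies} — while the rest of the argument is essentially formal, relying on the fact that any family embeds (after enlarging by $B$-parameters) inside a versal model.
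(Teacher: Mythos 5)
Your argument is correct, but it implements the reduction differently from the paper. The paper goes \emph{down}: given a counterexample $\pi:\cC\to B$ with bad summand $\fF$ and a point $b\in\mathrm{Supp}\,\fF$, it chooses a smooth closed $B'\subset B$ through $b$ on which the differential of the classifying map $\phi:B\to\VV(\cC_b)$ is injective and which is not contained in $\mathrm{Supp}\,\fF$; Theorem \ref{thm:smoothness}(1) keeps $\cC^{[d]}|_{B'}$ smooth, Corollary \ref{cor:subfamilies} keeps the counterexample alive, and after shrinking $B'$ embeds in the miniversal base, contradicting the hypothesis. You go \emph{up}: you pull back $\cC^V\to\VV$ along $\mathrm{pr}_\VV:B\times\VV\to\VV$ and realize $\pi$ as the restriction of the resulting locally versal family to the graph $\Gamma_f$. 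Both routes lean on Corollary \ref{cor:subfamilies} as the transport mechanism, but yours trades the paper's delicate choices (injectivity of $\mathrm{d}\phi_b|_{B'}$, $B'\not\supset\mathrm{Supp}\,\fF$ while still meeting it at $b$, preservation of smoothness of the relative Hilbert scheme under restriction) for the automatic facts that $\tilde\cC^{[d]}\cong\cC^{V,[d]}\times B$ is smooth and that $\Gamma_f\not\subset\mathrm{Supp}\,\tilde\fF$ holds vacuously once $\tilde\fF=0$. The price is that you must invoke the hypothesis for the locally versal but non-miniversal family over $B\times\VV$, rather than only for (subfamilies of) the miniversal deformations $\VV(\cC_b)$; this is harmless here because Proposition \ref{prop:support} establishes the theorem for \emph{all} locally versal families, but it is worth being explicit that this is the reading of ``versal family'' your proof requires. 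Your closing remarks --- that the statement is local on $B$ and that a positive-codimension summand is detected at a general point of its support --- correctly patch the local conclusions together.
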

\begin{proof}
  By Corollary \ref{cor:subfamilies}, the hypothesis implies 
  that Theorem \ref{thm:support} holds for any subfamily of a versal family.
  Now let $ \pi: \cC \to B$ be a family such that the theorem fails; 
  let $\fF$ be the summand 
  of $\mathrm{R} \pi^{[d]}_* \CC$ whose support is not all of $B$, and let
  $b \in B$ be a point such that $\fF_b \ne 0$. 
  Let $\phi: B \to \VV(\cC_b)$ be a map to the miniversal deformation, 
  and let $B' \subset B$ be a smooth closed subvariety through $b$ such that
  $\mathrm{d} \phi_b |_{B'}$ is an isomorphism onto the image of 
  $\mathrm{d} \phi_b (T_b B)$.  By item (1) of Theorem
  \ref{thm:smoothness}, 
  $\cC^{[d]}|_{B'}$ is still smooth.  According to 
  Corollary \ref{cor:subfamilies}, choosing 
  $B'\not\subset \mathrm{Supp}\,\fF$ 
  ensures that the restricted family
  still provides a counterexample in any neighborhood of $b$. 
  Shrinking still further, the map $B' \to \VV(\cC_b)$ may be taken
  to be the embedding of a smooth subvariety, giving a contradiction.
\end{proof}

We now prove Theorem \ref{thm:support} for the versal family.  The argument
is an induction on the cogenus, which depends crucially on the properties
of the versal family identified in Theorems \ref{thm:versal} and
\ref{thm:smoothness}. 
For clarity, we separate topological generalities from the specific
properties of the versal family.

\begin{definition}
  Let $X$ be a smooth complex analytic space with a constructible stratification
  $X = \coprod X_i$ such that $X_i$ is everywhere 
  of codimension $\ge i$.
  We write $\mathfrak{N}(\coprod X_i)$ for the full subcategory of 
  $\mathrm{D}^b_c(X)$ whose objects $\fF$ have the following property.  
 
  \begin{quote} 
  For $x \in X_i$ with $i< \dim X$ and
  for generic, sufficiently small,
  polydiscs
  $X \supset 
  \DD^i \times \DD \supset \DD^i \times 0 \ni x$,
  for sufficiently small
  $\epsilon \in \DD$, the restriction 
  \[\fF_x = \mathrm{R}\Gamma(\DD^i \times 0, \fF|_{\DD^i \times 0}) = 
  \mathrm{R}\Gamma(\DD^i \times \DD, \fF|_{\DD^i \times \DD}) \to 
  \mathrm{R}\Gamma(\DD^i \times \epsilon,\mathcal{F})\] 
  is an 
  isomorphism.  
  \end{quote}
\end{definition}

\begin{lemma} \label{lem:thick}
  $\mathfrak{N}(\coprod X_i)$
  is a thick triangulated subcategory of
  $\mathrm{D}^b_c(X)$, 
  i.e., it is closed
  under shifts, triangles, and taking summands. 
\end{lemma}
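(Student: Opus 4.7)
The plan is to observe that membership in $\mathfrak{N}(\coprod X_i)$ is cut out, pointwise in $X$, by a single morphism in the derived category of vector spaces being an isomorphism. More precisely, for each $x \in X_i$ and each generic sufficiently small polydisc $\DD^i \times \DD$ as in the definition, both assignments
\[
\fF \mapsto \mathrm{R}\Gamma(\DD^i \times 0, \fF) \quad \text{and} \quad \fF \mapsto \mathrm{R}\Gamma(\DD^i \times \epsilon, \fF)
\]
are exact (triangulated) functors from $\mathrm{D}^b_c(X)$ to $\mathrm{D}^b(\mathrm{Vect}_\CC)$, and restriction along the inclusion $\DD^i \times \epsilon \hookrightarrow \DD^i \times \DD$ defines a natural transformation between them. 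The condition defining $\mathfrak{N}$ is that this natural transformation, evaluated on $\fF$, is an isomorphism in $\mathrm{D}^b(\mathrm{Vect}_\CC)$ for every $x$ and every choice of polydisc.

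Once the definition is recast this way, each closure property is formal. First I would handle shifts: the restriction map for $\fF[n]$ is the shift of the restriction map for $\fF$, and a morphism is an isomorphism iff its shift is. Second, for summands, if $\fF = \gG \oplus \hH$, the restriction morphism for $\fF$ is the direct sum of the restriction morphisms for $\gG$ and $\hH$; in an additive category, a direct sum of morphisms is an isomorphism if and only if each summand is, so $\fF \in \mathfrak{N}$ forces both $\gG, \hH \in \mathfrak{N}$. Third, for a distinguished triangle $\fF \to \gG \to \hH \to \fF[1]$ with two members in $\mathfrak{N}$, applying the two exact functors above and the restriction natural transformation produces a morphism of distinguished triangles in $\mathrm{D}^b(\mathrm{Vect}_\CC)$ at each $x$; two of the three vertical maps are isomorphisms, hence so is the third, by the standard two-out-of-three property (equivalently, by the five-lemma applied to the associated long exact sequence in cohomology).

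Since these three closure properties together are the definition of a thick triangulated subcategory, this completes the argument. I do not expect any serious obstacle: the content of the lemma is purely formal once one recognizes that the defining condition is the invertibility of a natural morphism between values of two exact functors, and no geometric information about the stratification $\coprod X_i$ is used — the stratification only determines which points $x$ and which polydiscs must be tested.
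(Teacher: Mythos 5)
Your argument is correct; the paper states this lemma without proof, and the formal verification you give---recasting membership as the invertibility of a natural transformation between two exact functors $\mathrm{R}\Gamma(\DD^i\times \DD,-)\to\mathrm{R}\Gamma(\DD^i\times\epsilon,-)$ and invoking additivity for summands plus the two-out-of-three property for morphisms of distinguished triangles---is exactly the intended one. The only point worth making explicit is that for the triangle axiom you must choose the polydisc and $\epsilon$ small and generic enough to work simultaneously for the two members assumed to lie in $\mathfrak{N}$, which is possible because the defining condition is of ``for all sufficiently small and generic'' type and such conditions are stable under finite intersection.
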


\begin{lemma} \label{lem:measure}
  Let $X^+ \subset X$ be an open subset such that 
  $X_i \setminus X^+$ is of codimension $> i$. 
  Then the composition 
   $\mathfrak{N}(\coprod X_i) \to \mathrm{D}^b_c(X) \to \mathrm{D}^b_c(X^+)$, where the second functor is given by restriction to the open set $X^+$, is faithful.
\end{lemma}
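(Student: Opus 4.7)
I would prove the equivalent statement that if $\fF \in \mathfrak{N}(\coprod X_i)$ has $\fF|_{X^+}=0$ then $\fF = 0$, by induction on the stratum index $i$, showing $\fF|_{X_i}=0$ for every $i$; faithfulness on morphism sets will then follow by applying the same stalkwise analysis to a morphism $f$ satisfying $f|_{X^+}=0$. The base case $i=0$ is immediate: at $x \in X_0$, the defining property with $\DD^0$ a point collapses to $\fF_x \cong \fF_\epsilon$ for generic $\epsilon$ nearby, and since $X_0 \setminus X^+$ has positive codimension, $\epsilon$ can be taken in $X_0 \cap X^+$ where $\fF$ vanishes.

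For the inductive step, assume $\fF|_{X_j}=0$ for all $j<i$ and pick $x \in X_i$. The defining property supplies $\fF_x = \mathrm{R}\Gamma(\DD^i \times \epsilon, \fF|_{\DD^i \times \epsilon})$ for a generic polydisc and generic $\epsilon \in \DD$, and I analyze the intersection of the slice $\DD^i \times \epsilon$ with each stratum: (i) for $j > i$ the slice misses $X_j$ by the dimension count $i + \dim X_j \leq i + (\dim X - j) < \dim X$; (ii) for $j < i$ the slice may meet $X_j$ in a positive-dimensional set, but $\fF$ already vanishes there by the inductive hypothesis; (iii) for $j=i$ the slice meets $X_i$ in finitely many points (transversality), and the hypothesis $\mathrm{codim}(X_i \setminus X^+) > i$ means that the projection of $(X_i \setminus X^+) \cap (\DD^i \times \DD)$ to the $\DD$-factor is a proper subvariety, so choosing $\epsilon$ outside this projection forces $\DD^i \times \epsilon \cap X_i \subset X^+$, where $\fF$ again vanishes. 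Combining, $\fF|_{\DD^i \times \epsilon}=0$ and hence $\fF_x = 0$.

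The hardest step will be (iii), which requires synchronizing the genericity of the polydisc (so the defining property of $\mathfrak{N}$ applies and the slice is transverse to $X_i$) with the genericity of $\epsilon$ (so the slice avoids $X_i \setminus X^+$); this uses the codimension hypothesis in its sharpest form, and is what the condition $\mathrm{codim}(X_i \setminus X^+) > i$ is designed to guarantee. For the final passage to the faithfulness statement, naturality of the slice isomorphism gives $f_x = \mathrm{R}\Gamma(\DD^i \times \epsilon, f|_{\DD^i \times \epsilon})$ for a morphism $f$ with $f|_{X^+}=0$, and the same three-step analysis produces $f|_{\DD^i \times \epsilon}=0$ and so $f_x = 0$ at every $x$; upgrading this to the vanishing of $f$ itself is achieved via the local cohomology identification $\ker\bigl(\mathrm{Hom}(\fF,\gG) \to \mathrm{Hom}(\fF|_{X^+},\gG|_{X^+})\bigr) = \mathrm{H}^0_{X \setminus X^+}(X, \mathrm{R}\mathcal{H}om(\fF,\gG))$, whose vanishing follows from the same stratified codimension bound applied to each stratum of $X \setminus X^+$.
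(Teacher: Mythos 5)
Your argument is essentially the paper's: an induction over the strata in which the generic slice $\DD^i \times \epsilon$ near a point of $X_i$ misses the codimension-$>i$ locus $(X_i\setminus X^+)\cup X_{>i}$ entirely, so every stalk of $\fF$ on the slice vanishes by the inductive hypothesis together with $\fF|_{X^+}=0$, whence $\fF_x=\mathrm{R}\Gamma(\DD^i\times\epsilon,\fF)=0$. The paper's proof establishes only this object-level statement (which is all that is used in Proposition \ref{prop:support}); your additional paragraph upgrading it to faithfulness on morphisms goes beyond the paper and its local-cohomology step would need further justification, but it is not required for the application.
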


\begin{proof}

  Note that the condition on $X^+$ implies that $X_{\dim X} \subseteq X^+$.
  Consider $\mathcal{F} \in \mathfrak{N}(\coprod X_i)$ such that
  $\mathcal{F}|_{X^+} = 0$.  We must show $\mathcal {F}_x = 0$ for all $x \in X$.  
  Suppose by induction $\mathcal{F}_x = 0$ for $x \in X_{< i}$ and consider 
  $x \in X_i \setminus X^+$.  
  Evidently $(X_i \setminus X^+) \cup X_{> i}$ is of codimension $> i$, 
  so the generic $\DD^i \times \epsilon$ from the definition of 
  $\mathfrak{N}( \coprod X_i)$ passing near $x$ misses this locus
  completely.  Thus by assumption and the induction hypothesis, 
  $\fF_x = \mathrm{R}\Gamma(\DD^i \times \epsilon,\mathcal{F}) = 0$. 
 
\end{proof}

We now apply the statements above to $X=B$, the base of a locally versal family, with  the stratification 
$B= \coprod B_i$ given by the loci of curves of cogenus $i$,  
 and $X^+\subseteq B$ the open set 
parameterizing curves with at worst nodal singularities.

\begin{proposition}  \label{prop:pushcinn}
  Let $\pi: \cC \to B$ be a locally versal family of curves.  
  Let $B_i$ be the locus of curves of cogenus $i$.
  Then $\mathrm{R}\pi^{[d]}_* \CC[\dim B] \in \mathfrak{N}(\coprod B_i)$.
\end{proposition}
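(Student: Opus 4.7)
\noindent\textit{Proof plan.}

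My plan is to verify the defining condition of $\mathfrak{N}(\coprod B_i)$ at each $b \in B_i$. Set $\fF := \mathrm{R}\pi^{[d]}_* \CC[\dim B]$; this is constructible since $\pi^{[d]}$ is proper. The first two equalities $\fF_b = \mathrm{R}\Gamma(\DD^i \times 0, \fF) = \mathrm{R}\Gamma(\DD^i \times \DD, \fF)$ hold automatically for sufficiently small generic polydiscs: by Theorem \ref{thm:versal}, $B_i$ is the closure of the locally closed, equisingular $i$-nodal stratum along whose dense open part $\fF$ is locally constant, and the conical structure of constructible complexes at a generic slicing gives the equalities. The substance is the final restriction isomorphism, which by proper base change is equivalent to $\mathrm{R}\Gamma(\cC^{[d]}|_{\DD^i \times \DD}, \CC) \xrightarrow{\sim} \mathrm{R}\Gamma(\cC^{[d]}|_{\DD^i \times \epsilon}, \CC)$.

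I would first simplify the central fibre. By Corollary \ref{cor:splitting}, after shrinking $B$ around $b$, we can replace $\cC$ by a family $\cC'$ whose central fibre is rational of arithmetic genus $i$ and which induces the same deformations of the singularities. Macdonald's formula \eqref{eq:macdonald}, combined with the splitting $\mathrm{R}^1 \widetilde{\pi}_* \CC = \mathrm{R}^1 \widetilde{\pi}'_* \CC \oplus \mathrm{H}^1(\overline{\cC_b})$, expresses each $\mathrm{R}^j \widetilde{\pi}^{[d]}_* \CC$ as a direct sum of $\mathrm{R}^{\le j} \widetilde{\pi}'^{[d]}_* \CC$'s tensored with constant local systems; by thickness of $\mathfrak{N}$ (Lemma \ref{lem:thick}) it suffices to check the claim for $\cC'$. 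Then by Proposition \ref{prop:splitting}, local versality yields an analytic identification $B \cong \VV \times B'$ near $b$ with $\VV = \prod_j \VV_j$ the product of miniversal deformations of the singularities $p_j$ of $\cC'_b$, under which $\coprod B_i$ is pulled back from the cogenus stratification of $\VV$. I would choose $\DD^i \times 0$ tangent to the equigeneric stratum and $\DD$ a generic transverse polydisc in $\VV$.

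The key geometric ingredient is the local product structure of the Hilbert scheme. For each subscheme $z = z_0 + \sum_j z_j \in {\cC'}^{[d]}_b$ of type $(d_0, (d_j))$ --- length-$d_0$ supported at smooth points, length-$d_j$ supported at $p_j$ --- the argument of Theorem \ref{thm:smoothness}(1) provides an analytic-local product isomorphism of germs $({\cC'}^{[d]}, z) \cong (\text{smooth-point symmetric product}, z_0) \times \prod_j (\xX_j^{[d_j]}, z_j)$, compatible with the factorisation $B \cong \VV \times B'$. Via Künneth and a stratification of ${\cC'}^{[d]}_b$ by such partitions, the restriction of $\fF$ near $b$ decomposes into summands built from external tensor products of the single-singularity sheaves $\mathrm{R}\pi_j^{[d_j]} \CC$ on $\VV_j$ and constant contributions from the smooth part. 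Because membership in $\mathfrak{N}$ is preserved under external tensor products with respect to product stratifications, pullback along smooth maps, and tensoring with constants, the proposition reduces to showing $\mathrm{R}\pi_j^{[n]} \CC \in \mathfrak{N}$ for the cogenus stratification of $\VV_j$, for each miniversal deformation $\xX_j \to \VV_j$ of a plane curve singularity.

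The hard part will be this single-singularity statement. Central- and generic-fibre cohomologies of a local Hilbert scheme genuinely differ, so the cancellation required for $\mathfrak{N}$ can emerge only after summing across length partitions $(d_j)$. I expect the conclusion to come from combining purity of $\fF$ (from smoothness of $\cC^{[d]}$ and the decomposition theorem), the support and codimension estimates of Lemmas \ref{lem:GM} and \ref{lem:estimate} and Corollary \ref{cor:codim}, and the BPS-type symmetry of the local generating series (Proposition \ref{prop:bps}), which is precisely the Grothendieck-group shadow of the summed-cohomology identity one needs. Lifting this last symmetry from the Grothendieck group to an identity of complexes --- most plausibly via a weight argument in the spirit of Proposition \ref{prop:weights} --- is where I anticipate the main work.
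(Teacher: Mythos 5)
There is a genuine gap: your proposal never closes, and the route you choose misses the single observation that makes this proposition essentially soft. The whole point of the definition of $\mathfrak{N}$ is that the polydisc $\DD^i\times\DD$ is \emph{generic}. For $b\in B_\delta$ in a locally versal family, a generic polydisc $\DD^\delta\times 0\subset\DD^\delta\times\DD$ through $b$ maps to a generic subspace of dimension $\ge\delta$ in the first-order deformations of the singularities of $\cC_b$, so items (3) and (4) of Theorem \ref{thm:smoothness} guarantee that $\cC^{[d]}|_{\DD^\delta\times 0}$ and $\cC^{[d]}|_{\DD^\delta\times\DD}$ are \emph{smooth}. After shrinking, the projection $\cC^{[d]}|_{\DD^\delta\times\DD}\to\DD$ is then a smooth map, hence topologically locally trivial over $\DD$, and the restriction $\mathrm{R}\Gamma(\DD^\delta\times\DD,\pi^{[d]}_*\CC)\to\mathrm{R}\Gamma(\DD^\delta\times\epsilon,\pi^{[d]}_*\CC)$ is an isomorphism; together with the homotopy equivalences $\cC^{[d]}_b\simeq\cC^{[d]}|_{\DD^\delta\times 0}\simeq\cC^{[d]}|_{\DD^\delta\times\DD}$ (properness plus conical structure) this is exactly the defining condition. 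That is the paper's entire proof. Note also that your choice of $\DD^i\times 0$ ``tangent to the equigeneric stratum'' is the opposite of what the definition requires and of what makes the argument work: the slice must be generic, hence transverse to $B_i$, precisely so that Theorem \ref{thm:smoothness} applies to it.

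The route you sketch instead does not go through. The reduction to single singularities via K\"unneth is not available at the level of derived pushforwards: the decomposition of $\cC^{[d]}_b$ by the lengths of the punctual pieces is only a locally closed stratification, so $\mathrm{R}\pi^{[d]}_*\CC$ does not split into external tensor products of local contributions; moreover the local punctual Hilbert schemes are not proper over the $\VV_j$, so proper base change fails and the stalks of their pushforwards are not the fibrewise cohomologies your argument would need. More decisively, you leave the resulting ``single-singularity statement'' unproved and propose to obtain it by lifting the symmetry of Proposition \ref{prop:bps} from the Grothendieck group to complexes via a weight argument --- but that lifting is essentially the content of Theorem \ref{thm:support} itself (it is how the first proof concludes), so invoking it here, inside the second proof, would be circular. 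This proposition is meant to be a cheap topological input to the induction on cogenus; it neither needs nor can afford to carry the weight of the main theorem.
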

\begin{proof}
  We check at some $b \in B_\delta$.  
  The definition of $\mathfrak{N}$ is local on the base; 
  as $\widetilde{\pi}$ is proper, after shrinking 
  $B$ the inclusion $\cC^{[d]}_b \hookrightarrow \cC^{[d]}$ becomes a homotopy
  equivalence. 
  Any sufficiently small polydisc $b \in \DD^{\delta} \times \DD \subset 
  B$ will induce homotopy equivalences
  $\cC_b^{[d]}  \to \cC^{[d]}|_{\DD^{\delta} \times 0} \to 
  \cC^{[d]}|_{\DD^{\delta}\times \DD}$.  By item (3) of Theorem \ref{thm:smoothness},
  a generic choice  ensures
  that the latter two spaces are smooth, possibly after further shrinking the
  discs; by openness of smoothness we can shrink $\DD$ still further so that
  the projection $\cC^{[d]}_{\DD^\delta \times \DD} \to \DD$ is smooth.  
  It follows that, possibly after shrinking $\DD^\delta$ further,  
  $\mathrm{H^*}(\cC^{[d]}_{\DD^\delta \times \DD}) = 
  \mathrm{R}\Gamma(\DD^\delta \times \DD,\mathrm{R}\pi^{[d]}_* \CC ) \to 
  \mathrm{R}\Gamma(\DD^\delta \times \epsilon,\mathrm{R}\pi^{[d]}_* \CC)
  = \mathrm{H^*}(\cC^{[d]}_{\DD^\delta \times \epsilon})$ is an isomorphism
  (see Proof of Thm. 1 in \cite{FGvS} for a similar argument).
\end{proof}

\begin{proposition} \label{prop:support}
   Theorem \ref{thm:support} holds for all locally 
   versal families of curves.
\end{proposition}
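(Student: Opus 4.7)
The plan is to combine the $\mathfrak{N}$-formalism developed above with an induction on cogenus that bottoms out at the nodal strata of the versal family. By the decomposition theorem, write $\mathrm{R}\pi^{[d]}_* \CC[d+\dim B] = \gG \oplus \fF$, where $\gG = \bigoplus_i \mathrm{IC}(B, \mathrm{R}^{d+i}\widetilde{\pi}^{[d]}_*\CC)[-i]$ is the sum of IC summands of full support (the claimed right-hand side of Theorem \ref{thm:support}) and $\fF$ collects the summands supported in positive codimension. The goal is to prove $\fF = 0$.

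By Proposition \ref{prop:pushcinn}, the full pushforward lies in $\mathfrak{N}(\coprod B_i)$, where $B_i$ denotes the codimension-$i$ stratum of cogenus-$i$ curves; by Lemma \ref{lem:thick}, so does $\fF$. Lemma \ref{lem:measure} then reduces the vanishing $\fF = 0$ to producing an open $B^+ \subset B$ with $\fF|_{B^+} = 0$ and $B_i \setminus B^+$ of codimension $> i$ in $B$ for all $i$. I will build such a $B^+$ as $\bigsqcup_\delta U_\delta$, with $U_\delta \subset B_\delta$ open dense, by induction on $\delta$.

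The base case $\delta = 0$ is immediate: on the smooth locus $B_0$, the map $\widetilde{\pi}^{[d]}$ is smooth and proper, so the decomposition theorem delivers only local systems, giving $\fF|_{B_0} = 0$ and $U_0 = B_0$. For the inductive step, I will take a generic $b \in B_\delta$; by Theorem \ref{thm:versal} such a point parameterizes a $\delta$-nodal curve, and local versality supplies an analytic product decomposition of $B$ near $b$ as $\DD^\delta \times \DD^{\dim B - \delta}$, the first factor being the product of the one-parameter versal deformations of the $\delta$ nodes. Strata of cogenus $\delta - k$ near $b$ correspond to loci where $k$ of the node parameters are nonzero, so by the inductive hypothesis $\fF$ already vanishes on a dense subset of every shallower stratum in a punctured neighborhood of $b$. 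Combining this with the transverse-slicing property built into the definition of $\mathfrak{N}$, and using Corollary \ref{cor:splitting} to replace the family locally by one of rational curves sharing the same nodes (so that the Hilbert scheme admits an explicit stratification), should propagate the vanishing of $\fF$ to $b$ itself, yielding the required dense open $U_\delta$.

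The main obstacle will be this inductive step at $\delta$-nodal points. Although the base splits as a product of node deformations, the relative Hilbert scheme $\cC^{[d]}$ admits no analogous product decomposition; it is only stratified by how length-$d$ subschemes distribute across the nodes and the smooth locus. The delicate point will be to show that $\fF$ cannot introduce a genuinely new contribution at the codimension-$\delta$ locus beyond what is already visible on shallower strata. The $\mathfrak{N}$-formalism is engineered precisely to make such propagation automatic, but verifying its hypotheses for $\fF$ at a $\delta$-nodal point requires a careful analysis of how transverse slices to $B_\delta$ interact with the stratification of $\cC^{[d]}$, and this is where the bulk of the technical work will lie.
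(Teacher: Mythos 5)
Your reduction matches the paper's up to the key point: you correctly place $\fF$ in $\mathfrak{N}(\coprod B_i)$ via Proposition \ref{prop:pushcinn} and Lemma \ref{lem:thick}, and correctly observe via Lemma \ref{lem:measure} that it suffices to exhibit an open $B^+$ with $\fF|_{B^+}=0$ meeting each stratum $B_i$ in a subset whose complement in $B_i$ has codimension $>i$. The gap is in your inductive step. The $\mathfrak{N}$-formalism cannot ``propagate'' vanishing from the shallower strata to a generic point $b\in B_\delta$: the stalk $\fF_b$ is identified with $\mathrm{R}\Gamma(\DD^\delta\times\epsilon,\fF)$ for a pushed-off transverse slice, and since $B_\delta$ has codimension exactly $\delta$, that $\delta$-dimensional slice still meets $B_\delta$ at nearby points. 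To conclude $\fF_b=0$ you would need to already know $\fF$ vanishes at those points of $B_\delta$ --- which is exactly what you are trying to prove. This is why Lemma \ref{lem:measure} takes the vanishing on a dense open subset of \emph{every} stratum as a hypothesis rather than deriving it from the shallower ones; if your induction worked, $\fF=0$ would follow from $\fF|_{B_0}=0$ alone, which is automatic, and the theorem would be contentless. Nothing you have said excludes a summand of the form $\mathrm{IC}(\overline{B_\delta},L)$ for a nontrivial local system $L$ on the nodal locus.

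The missing ingredient is a genuine computation at the nodal points, not a formal slicing argument: one must prove Theorem \ref{thm:support} directly for locally versal families of nodal curves. The paper does this in Lemma \ref{lem:piclef} by computing the monodromy of $\mathrm{R}^1\widetilde{\pi}_*\CC$ via Picard--Lefschetz ($\delta$ Jordan blocks of length $2$), hence the monodromy invariants $\HH^{[d]}$, which by the decomposition theorem occur as a summand of $\mathrm{H}^*(\cC_b^{[d]})$; it then shows there is no room for anything else by comparing weight polynomials, using $\sum q^d[(\PP^1_+)^{[d]}]=(1-q+q^2\LL)/((1-q)(1-q\LL))$ together with Equation (\ref{eq:normalized}) and Proposition \ref{prop:weights}. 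Your appeal to Corollary \ref{cor:splitting} and an ``explicit stratification'' gestures in the right direction, but until this computation (or an equivalent one) is carried out, the argument does not close.
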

\begin{proof}
  Let $\pi: \cC \to B$ be a locally versal family of curves, and
  let $B_i$ be the locus of curves of cogenus $i$.  
  Let $\fF$ be any summand of $\mathrm{R}\pi_*^{[d]} \CC$ supported
  on a proper subvariety of $B$.  Then by Lemma \ref{lem:thick}
  and Proposition \ref{prop:pushcinn}, $\fF \in \mathfrak{N}(\coprod B_i)$. 
  By Theorem \ref{thm:versal} \cite{DH,T}, 
  the locus of nodal curves is dense in each $B_i$; thus by
  Lemma \ref{lem:measure} we need only check that the restriction of
  $\fF$ to the locus of nodal curves is zero, i.e., that Theorem
  \ref{thm:support} holds for families of nodal curves; 
  this is done in the next Lemma. 
\end{proof}

\begin{lemma} \label{lem:piclef}
  Theorem \ref{thm:support} holds for  locally 
  versal families of nodal curves.
\end{lemma}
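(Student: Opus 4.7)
The plan is to work locally near a point $b \in B$ where $\cC_b$ has $\delta$ nodes. By Proposition~\ref{prop:splitting}, a neighborhood of $b$ is smoothly equivalent to $\AA^\delta$, with the $i$-th coordinate $t_i$ controlling the one-parameter smoothing $\{xy = t_i\}$ of the $i$-th node; the cogenus stratification is then the coordinate arrangement, with $\{0\}$ being the deepest stratum $B_\delta$. I would then induct on $\delta$. The base case $\delta = 0$ is trivial since the map is smooth. For the inductive step, assume the result for all versal nodal families of maximum cogenus $< \delta$, and let $\fF$ be a summand of $\mathrm{R}\pi^{[d]}_* \CC[d + \delta]$ whose support is a proper subvariety of $\AA^\delta$.

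At any $b' \in \AA^\delta \setminus \{0\}$ the cogenus is strictly less than $\delta$, and a suitable transverse slice through $b'$ gives a versal family of lower cogenus. The inductive hypothesis combined with Corollary~\ref{cor:subfamilies} then forces $b' \notin \mathrm{Supp}\,\fF$, so $\mathrm{Supp}\,\fF \subseteq \{0\}$.

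To kill the origin I distinguish two cases. For $d \le \delta$, I take a generic $d$-dimensional linear slice $L \subset \AA^\delta$ through $0$. By Theorem~\ref{thm:smoothness}(3) the restricted relative Hilbert scheme is smooth, and Corollary~\ref{cor:subfamilies} produces a nonzero summand of $\mathrm{R}\pi^{[d]}_*\CC|_L$ supported at $0 \in L$, of codimension $d$ in the $d$-dimensional base $L$; this directly contradicts Lemma~\ref{lem:GM}. For $d > \delta$, the codimension estimate no longer suffices and one must use Picard--Lefschetz: the monodromy about $\{t_i = 0\}$ is a Dehn twist along the vanishing cycle $\alpha_i$ attached to the $i$-th node, and versality forces $\alpha_1, \ldots, \alpha_\delta$ to be linearly independent in $\mathrm{R}^1 \widetilde{\pi}_* \CC$. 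This gives an explicit abelian unipotent model of the local system on $\AA^\delta \setminus \Delta$. Pairing this with the product formula splitting subschemes of $\cC_0$ according to which node they are supported on (in the spirit of the identity preceding Proposition~\ref{prop:bps}), I compute the stalk $\mathrm{H}^*(\cC_0^{[d]})$ directly and match it, term by term, with the stalk at $0$ of $\bigoplus \mathrm{IC}(\AA^\delta, \mathrm{R}^{d+i} \widetilde{\pi}^{[d]}_* \CC)[-i]$, the latter being computable from the explicit monodromy via the nilpotent orbit theorem. Exact agreement of the two stalks leaves no room for $\fF$. Equivalently, the comparison can be executed at the level of weight polynomials using Proposition~\ref{prop:weights} together with the symmetry of $Z_C(q)$ from Proposition~\ref{prop:bps}.

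The main obstacle is the case $d > \delta$, where dimensional arguments fail: one must combine the explicit Picard--Lefschetz description of the IC extension with the node-by-node product decomposition of $\cC_0^{[d]}$ to carry out the stalk-cohomology comparison. The case $d \le \delta$, by contrast, is purely formal once the inductive reduction to $\{0\}$ is in hand.
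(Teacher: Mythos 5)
Your plan is correct and, in its essential part, is the paper's own argument: the case you identify as the real content ($d>\delta$) is handled exactly as in the text, by computing the monodromy of $\mathrm{R}^1\widetilde{\pi}_*\CC$ via Picard--Lefschetz (note: linear independence of the vanishing cycles comes from irreducibility of $\cC_b$, not from versality), identifying the stalk of the full-support IC summands with monodromy invariants, and comparing weight polynomials against $\sum q^d[\cC_b^{[d]}]=\bigl(\sum q^d[\overline{\cC_b}^{[d]}]\bigr)(1-q+q^2\LL)^{\delta}$ via Proposition \ref{prop:weights}. Your preliminary reduction of the support to the origin and the separate dimension-count for $d\le\delta$ are sound but unnecessary here, since the weight-polynomial identity is verified at every point of the base and for every $d$ at once; also be aware that the ``term by term'' stalk comparison you offer as an alternative is precisely the computation the paper declines to carry out, falling back on the (sufficient) weight-polynomial version.
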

\begin{proof}
  Let $\pi: \cC \to B$ be such a family. 
  Let $b \in B$ be the base point, let $\{c_1, \cdots c_{\delta}\} 
  \subseteq \cC_b$ be the nodal set of the central curve $\cC_b$, 
  and denote by $r$ its geometric genus. 
  Shrink $B$ if necessary, we can assume:
  \begin{enumerate}
  \item 
    the discriminant locus is a normal crossing divisor $\Delta= \cup D_i$ 
    with 
    $i=1, \cdots, \delta$, where $D_i$ is the locus in which the $i-$th node $c_i$ is preserved.
  \item
    If  $b_0$ is  such that $\cC_{b_0}$ is nonsingular, the vanishing cycles
    $\{ \zeta_1, \cdots, \zeta_{\delta} \}$ in $\cC_{b_0}$ associated with 
    the nodes of $\cC_b$ 
    are disjoint.
\end{enumerate}
As the curve  $\cC_b$ is irreducible, the cohomology classes in 
$\mathrm{H}^1(\cC_{b_0})$ of these vanishing cycles are linearly
independent, and can then be completed to a symplectic basis. 

Let $T_i$ be the generators of the (abelian) local fundamental group $\pi_1(B \setminus \Delta, b_0)$ where $T_i$ 
corresponds to ``going around $D_i$''. Then the monodromy defining the local system ${\mathrm R}^1 \widetilde{\pi}_* \CC$  on $B \setminus \Delta$   is given via the Picard-Lefschetz formula,
and, in the symplectic basis above, has a Jordan form consisting of $\delta$ Jordan blocks of length 2. From this it is easy to 
compute the invariants of the local systems obtained applying any linear algebra construction to $\mathrm{R}^1 \widetilde{\pi}_* \CC$, such as those 
who appear in $\mathrm{R}^i \widetilde{\pi}^{[d]}_* \CC$. Let $\SSS^{i,[d]}$ 
be the linear algebra operation, described by Formula \ref{eq:macdonald}, 
such that 
$\mathrm{R}^i \widetilde{\pi}^{[d]}_* \CC=\SSS^{i,[d]} R^1 \pi_* \CC $. 
Denote by $j: B \setminus \Delta \to B$ the open inclusion.

We have a natural isomorphism  
\[
 \label{eq:natiso}
 \left( \SSS^{i,[d]} \mathrm{H}^1(\cC_{b_0}) \right)^{\pi_1(B \setminus \Delta, b_0)} = 
 \hH^{-{\rm dim}B}(\mathrm{IC}(B, \mathrm{R}^i \widetilde{\pi}^{[d]}_* \CC) )_b
\]
between the monodromy invariants on $\SSS^{i,[d]} \mathrm{H}^1(\cC_{b_0})$
and the stalk at $b$ of the first non-vanishing cohomology sheaf 
of the intersection cohomology complex of 
$\mathrm{R}^i \widetilde{\pi}^{[d]}_* \CC$.
The decomposition theorem in \cite{BBD} then implies that 
$\mathrm{H}^*(\cC^{[d]}_{b})$ contains the Hodge structure 
\[  
  \HH^{[d]} :=  \bigoplus_i \left( \SSS^{i,[d]} \mathrm{H}^1(\cC_{b_0}) 
  \right)^{\pi_1(B \setminus \Delta, b_0)}
  \] 
as a direct summand, with the weight filtration defined in the standard way by the logarithms of the monodromy operators (see \cite{ck}).

It is easy to compute $\HH^{[d]}$ explicitly; presumably 
$\mathrm{H}^*(\cC^{[d]}_{b})$ can be computed by elementary methods and
shown to match; this would complete the proof.  In the absence of such
a calculation, we use Proposition \ref{prop:weights} and instead compare
weight polynomials.  On the one hand, we compute
$\sum q^d \mathfrak{w}(\HH^{[d]}) = (1- q + t^2 q^2)^{\delta} (1+tq)^{2 r}/
(1-q)(1-t^2 q)$.

On the other hand, 
when $C = \PP^1_+$ is a rational curve with a single node, Riemann-Roch ensures
that the Abel map is a projective bundle for any $d \ge 1$; when $d = 1$ 
we have $[\overline{J}^0(\PP^1_+)] = [\PP^1_+] = \LL$.  Thus we get the formula 
$\sum q^d [(\PP^1_+)^{[d]}] 
= (1 - q + q^2 \LL)/((1-q)(1-q\LL))$.  Comparison with Equation 
(\ref{eq:normalized})
gives $\sum q^d [\cC_b^{[d]}] = \left(\sum q^d [\overline{\cC_b}^{[d]}]\right)
(1 - q + q^2 \LL)^{\delta}$; taking weight polynomials gives the desired result.
\end{proof}

This completes the (second) proof of Theorem \ref{thm:support}.

\section{An application}
Given a projective map $F:X \to Y$ with $X$ nonsingular, the decomposition theorem (\cite{BBD})
$ 
   \mathrm{R} F_* \CC[ \dim X] =  \bigoplus_{i=-r}^{r}
  {}^p \mathrm{R}^i F_* \CC[\dim X][-i]
$
 gives, by proper base-change, an isomorphism
\[
\mathrm{H}^k(F^{-1}(y))\stackrel{\Phi_F}{\longleftarrow} \bigoplus_{i=-r}^{r}
 {\mathcal H}^{k-\dim X -i} ({}^p \mathrm{R}^i F_* \CC[\dim X])_y,
\]
for every $y \in Y$ and $k\in \NN$.
The filtration 
\[
\mathrm{H}^k_{\leq 0}( F^{-1}(y))\subseteq  \mathrm{H}^k_{\leq 1}( F^{-1}(y))  \subseteq \cdots \subseteq \mathrm{H}^k_{\leq 2r-1}( F^{-1}(y)) \subseteq \mathrm{H}^k_{\leq 2r}( F^{-1}(y))=\mathrm{H}^k( F^{-1}(y)), 
\]
where 
\[  \mathrm{H}^k_{\leq l}(F^{-1}(y)):= {\Phi_{F}}(  \bigoplus_{i=-r}^{-r+l}
{\mathcal H}^{k-\dim X -i} ( {}^p \mathrm{R}^i F_* \CC[\dim X])_y   ),
\]
is called the {\em perverse filtration } on $\mathrm{H}^k(F^{-1}(y))$ associated with $F$. 

Let $C$ be a complete integral locally planar curve. A deformation 
$
C\subset \cC \stackrel{\pi}{\longrightarrow} B \ni b,
$ with $C=\pi^{-1}(b)$,
such that $\cC^{[d]}$ is nonsingular, defines, by what above, a perverse filtration
$
\mathrm{H}^k_{\leq 0}({C}^{[d]})\subseteq \mathrm{H}^k_{\leq 1}({C}^{[d]})\subseteq \cdots \subseteq \mathrm{H}^k_{\leq 2d-1}({C}^{[d]})\subseteq \mathrm{H}^k_{\leq 2d}({C}^{[d]})=
\mathrm{H}^k({C}^{[d]}) 
$
on the cohomoloy groups of the $d-$th Hilbert scheme of $C$.
The following Proposition shows that this filtration is in fact intrinsic, i.e. it does not depend on $\pi$:

\begin{proposition}
\label{prop:pervind}
Let $C$ be a complete integral curve, Let $C\subset \cC \stackrel{\pi}{\longrightarrow} B \ni b,$ and  $C\subset \cC' \stackrel{\pi '}{\longrightarrow} B \ni b'$ be two deformations of 
$C$ with the property that  $\cC^{[d]}$ and $\cC'^{[d]}$ are nonsingular.
Then the perverse filtrations on $\mathrm{H}^k({C}^{[d]})$ 
associated with ${\pi}$ and ${\pi}'$ coincide.
\end{proposition}

\begin{proof}
By Theorem \ref{thm:support} we have 
\begin{equation}
\label{perfiltrsupp} 
\mathrm{H}^k_{\leq l}({C}^{[d]}):= {\Phi_{\pi}}(  \bigoplus_{i=-d}^{-d+l}
{\mathcal H}^{k-d-\dim B-i} ( \mathrm{IC}(B, \mathrm{R}^{d+i} \widetilde{\pi}^{[d]}_* \CC))_b   ),
\end{equation}
and similarly for $\pi'$.
By appropriately shrinking $B$ and $B'$ around $b$ and $b'$ respectively, we may assume that the families are the pullback from a versal deformation 
$C\subset \cC_{\rm v} \stackrel{\pi_{\rm v} }{\longrightarrow} B_{\rm v} \ni b_{\rm v}$ of $C$.
By arguments analogous to those in Section \ref{hsvd} we are reduced to prove the statement in the case when  $\pi: \cC= \cC_{\rm v}\times_{B_{\rm v}}B \to B$ and  
$\pi': \cC'=\cC_{\rm v}\times_{B_{\rm v}}B' \to B'$ are the restriction 
of $\cC_{\rm v} {\longrightarrow} B_{\rm v}$
to two smooth subvarieties $B \stackrel{i}{\hookrightarrow}  B_{\rm v} \stackrel{i'}{\hookleftarrow} B'$  with $b_{\rm v}=b=b' \in B \cap B'$. The natural restriction maps 
\[
i_*\mathrm{R}\pi^{[d]}_* \CC =  i_*i^*   \mathrm{R}{\pi^{[d]}_{\rm v}}_* \CC  \longleftarrow
\mathrm{R}{\pi^{[d]}_{\rm v}}_* \CC \longrightarrow    i'_*i'^*   \mathrm{R}{\pi^{[d]}_{\rm v}}_* \CC =  i'_*\mathrm{R}\pi'^{[d]}_* \CC 
\]
induce isomorphisms on the cohomology sheaves at the point $b_{\rm v}$,
and the statement now follows immediately from the expression (\ref{perfiltrsupp}) applied to the maps $\pi, \pi'$ and $\pi_{\rm v}$, and Lemma  \ref{lem:stillic}.
\end{proof}

\bigskip
\bigskip
\bigskip
\bigskip

\begin{tabular*}{\textwidth}{@{\extracolsep{\fill}} l l } 
Luca Migliorini & Vivek Shende  \\
Dipartimento di Matematica & Department of Mathematics \\
Universit\`a di Bologna & Massachusetts Institute of Technology \\
Piazza Porta S.Donato, 5, & Building 2, Room 248 \\
40126 Bologna (ITALY) & 77 Massachusetts Avenue  \\
{\tt luca.migliorini@unibo.it} & Cambridge, MA 02139 (USA) \\
 & {\tt vivek@math.mit.edu} 
\end{tabular*}
 

\begin{thebibliography}{10}

\bibitem[AIK]{AIK} A. Altman, A. Iarrobino, and S. Kleiman, {\em
Irreducibility of the Compactified Jacobian}, Proceedings of the
Nordic Summer School N. A. V. F., (Symposium in Mathematics, Oslo, 
1976).

\bibitem[AK]{AK} 
A. Altman and S. Kleiman, {\em Compactifying the Picard Scheme}, 
Adv. in Math. {\bf 35} (1980), 50-112. 

\bibitem[B]{B} K. Behrend, {\em Donaldson-Thomas invariants via microlocal
geometry}, Annals of Math. {\bf 170} (2009), 1307-1338.



\bibitem[BBD]{BBD}
A.A. Beilinson, J. Bernstein, P. Deligne,  {\em Faisceaux pervers},
 Ast\`erisque {\bf 100}  (1982), 5-171. 

\bibitem[BGS]{BGS}
J. Brian\c con, M. Granger, and J.-P. Speder, 
{\em Sur le sch\'ema de Hilbert d'une courbe plane.}
Ann. sci. de l'\'Ecole Normale Sup\'erieure, S\'er. 4, 14 no. 1 (1981), 1-25.


\bibitem[BP]{BP} J. Bryan and R. Pandharipande, {\em BPS states in Calabi-Yau
3-folds}, Geometry \& Topology {\bf 5} (2001), 287-318. 


\bibitem[CK]{ck}
E. Cattani, A. Kaplan,
{\em Polarized mixed Hodge structures and the local monodromy of a variation of Hodge structure}  
Invent. Math. {\bf 67} (1982), 101-115.
 
\bibitem[DH]{DH} S. Diaz and J. Harris, {\em Ideals associated to 
deformations of singular plane curves}, Tr.
AMS {\bf 309} (1988), 433-468.



\bibitem[FGvS]{FGvS} B. Fantechi, L. G\"ottsche, and D. van Straten, 
{\em Euler number of the compactified Jacobian and multiplicity 
of rational curves}, J. Alg. Geom. {\bf 8}.1 (1999), 115-133. 


\bibitem[GLS]{GLS} G.-M. Greuel, C. Lossen, and E. Shustin, 
{\em Introduction to Singularities and Deformations}
(Springer, 2007). 
 

\bibitem[GV]{GV} R. Gopakumar and C. Vafa, 
{\em M-theory and Topological strings I \& II}, 
[hep-th/9809187] \& [hep-th/9812187].

\bibitem[HST]{HST} S. Hosono, M-H. Saito, A. Takahashi, {\em
Relative Lefschetz action and BPS state counting}, Int. Math. Res. Not.
{\bf 2001}.15 (2001), 765-782. 

\bibitem[KR05]{KR} M. Khovanov, L. Rozansky, {\em 
    Matrix factorizations and link homology}, I. [math.QA/0401268]
  and II. [math.QA/0505056].

\bibitem[KST]{KST} M. Kool, V. Shende, and R. Thomas, {\em A short proof
of the G\"ottsche conjecture} 
Geom. Topol. {\bf 15} (2011), 397-406.

\bibitem[KT]{KT} M. Kool and R. Thomas,
{\em Reduced classes and curve counting on surfaces I} [arXiv:1112.3069]
{\em and II} [arXiv:1112.3072]. 

\bibitem[L]{L} G. Laumon, {\em
 Fibres de Springer et jacobiennes compactifi\'ees}, [arxiv:0204109].

 \bibitem[M]{mcd} I. G. Macdonald,  
 {\em The Poincare Polynomial of a Symmetric Product}, 
 Math. Proc. of the Cambridge Philos. Soc. {\bf 58} (1962), 563-568.
  
\bibitem[MY]{MY} D. Maulik and Z. Yun, {\em Macdonald formula for  
curves with planar singularities}, [arxiv:1107.2175].

\bibitem[N]{N} B. C. Ng\^o, {\em Le lemme fondamental pour les alg\`ebres de Lie}, Publ. Math. de l'IHES {\bf 111}, (2010). 

\bibitem[ORS]{ORS} A. Oblomkov, J. Rasmussen, and V. Shende, 
{\em The Hilbert scheme 
of a plane curve singularity and the 
HOMFLY homology of its link}, [arxiv:1201.2115].

\bibitem[OS]{OS} A. Oblomkov and V. Shende, {\em The Hilbert scheme 
of a plane curve singularity and the 
HOMFLY polynomial of its link}, [arxiv:1003.1568].

\bibitem[PT]{PT}  R. Pandharipande; R. P. Thomas 
{\em Stable pairs and BPS invariants}
J. Amer. Math. Soc. {\bf 23} (2010), 267-297.
 
\bibitem[S]{S} V. Shende, {\em Hilbert schemes of points on a locally planar 
curve and the Severi strata of its versal deformation}, [arxiv:1009.0914], 
to appear   in Comp. Math.

\bibitem[T]{T} B. Tessier, {\em 
    R\'esolution simultan\'ee - I. Famille de courbes}, in 
  S\'eminaire sur les singularit\'es des surfaces, 
  Springer LNM {\bf 777} (1980). 

\bibitem[VV]{VV} M. Varagnolo and E. Vasserot, 
{\em Finite-dimensional representations of DAHA and affine Springer 
fibers: The spherical case}, Duke Math. J. {\bf 147}.3 (2009), 439-540.



\end{thebibliography}
\end{document}